\documentclass[12pt]{amsart}
\usepackage{amsfonts, amsmath}

\usepackage{ amsmath, amsthm, amsfonts, amssymb, color}
 \usepackage{mathrsfs}
 \usepackage{amsmath,amstext,amsthm,amssymb,amsxtra}

\textheight=23.6cm \textwidth = 6.68 true in \marginparsep=0cm
\oddsidemargin=-0cm \evensidemargin=0cm \headheight=13pt
\headsep=0.8cm
\parskip=0pt
\hfuzz=6pt \widowpenalty=10000 \setlength{\topmargin}{-0.8cm}
\baselineskip 16pt \hfuzz=6pt

\theoremstyle{plain}
\newtheorem{theorem}{Theorem}[section]

\newtheorem{proposition}[theorem]{Proposition}
\newtheorem{lemma}[theorem]{Lemma}

\theoremstyle{definition}

\newtheorem{remark}[theorem]{Remark}

\allowdisplaybreaks

\def\R{\mathbb R}

\newcommand\ZZ{\mathbb{Z}}

\def\11{1\!\!1}
\def\supp{\rm{supp}}
\def\SL{\sqrt{L}}

\def\l{\lambda}
\newcommand\D{\mathcal{D}}

\def\f12{\frac{1}{2}}

\DeclareMathOperator{\support}{supp}

\title[Weighted restriction type estimates for Grushin operators ]
{Weighted restriction type estimates for Grushin operators and application to spectral multipliers and Bochner-Riesz summability}

\author{Peng Chen}
\author{ El Maati Ouhabaz}

\address{Peng Chen,  Institut de Math\'ematiques de Bordeaux,  Univ.  Bordeaux, UMR 5251,
351, Cours de la Lib\'eration 33405 Talence, France}
\email{peng.chen@math.u-bordeaux1.fr}

\address{El Maati Ouhabaz,  Institut de Math\'ematiques de Bordeaux,  Univ.  Bordeaux, UMR CNRS 5251,
351, Cours de la Lib\'eration 33405 Talence, France}
\email{Elmaati.Ouhabaz@math.u-bordeaux1.fr}
\thanks{The research of both authors was partially supported by the ANR project  HAB, ANR-12-BS01-0013-02.}
\subjclass{35K90, 35K50, 35K45, 47D06}
\medskip
\subjclass[2000]{}
\keywords{weighted restriction type estiamtes, Grushin operators, spectral multipliers,Bochner-Riesz summability  }
\date{\today}

\begin{document}

\begin{abstract} We prove weighted restriction type estimates for Grushin operators. These estimates are then used to prove
 sharp  spectral multiplier theorems  as well as  Bochner-Riesz summability results  with sharp exponent.
\end{abstract}

\maketitle


\bigskip

   \section{Introduction  }
\setcounter{equation}{0}
We consider Grushin operators  on $\R^{d_1}\times \R^{d_2} = \R_{x'}^{d_1}\times \R_{x''}^{d_2}$ defined by
\begin{eqnarray}\label{def:Grushin}
L :=-\sum_{j=1}^{d_1}\partial_{x'_j}^2-\big(\sum_{j=1}^{d_1}
|x'_j|^2\big)\sum_{k=1}^{d_2}\partial_{x''_k}^2.
\end{eqnarray}
Such operators, defined by the quadratic form technique, are self-adjoint in $L^2(\R^{d_1}\times \R^{d_2})$.
Let $E_{L}(\lambda)$ be the spectral resolution of the operator~$L$ for  $\lambda \ge 0$. By the spectral theorem
for every bounded Borel function $F : \R \to \mathbb{C}$, one can define
\begin{equation}\label{el}
F(L) = \int_0^\infty  F(\lambda) \,dE_{L}(\lambda).
\end{equation}
 The operator $F(L)$ is bounded on $L^2(\R^{d_1}\times\R^{d_2})$. This paper is devoted to  spectral
multiplier results  for $L$,  that is, we investigate minimal sufficient condition on  $F$ under
which the operator $F(L)$ extends to a bounded operator on
$L^p(\R^{d_1}\times\R^{d_2})$ for some range of $p$. In this context, the minimal condition on $F$ we have in mind is the same as
in the Fourier multiplier theorem, i.e., boundedness of $F(-\Delta)$ on $L^p(\R^d)$ where $\Delta$ is the Euclidean Laplacian.
We also study the closely related question of critical exponent $\delta$ for which the Bochner-Riesz means $(1-t
L)_+^\delta$ are bounded on $L^p(\R^{d_1}\times\R^{d_2})$ uniformly in $t \in [0,\infty)$.

Spectral multipliers and Bochner-Riesz summability for Grushin operators have been studied recently by other authors. In  \cite{MM},
it is  proved that for  $\delta> \frac{1}{2}(d_1+d_2)-\frac{1}{2}$, the Bochner-Riesz means $(1-t
L)_+^\delta$ are bounded on $L^p(\R^{d_1}\times\R^{d_2})$ uniformly in $t \in [0,\infty)$ for all $1\leq p\leq \infty$.  A previous result was proved in \cite{MS}  with the condition $\delta > \frac{1}{2}\max(d_1 + d_2, 2d_2) -\frac{1}{2}$. 
Our aim is to get  similar
 results for smaller values of $\delta$, i.e. when  $0<\delta<\frac{1}{2}(d_1+d_2)-\frac{1}{2}$. In this case, we cannot   hope for  $(1-t
L)_+^\delta$ to be bounded on $L^p(\R^{d_1}\times\R^{d_2})$ for all  $p \in [1,  \infty]$. Our aim is to prove that  $(1-t
L)_+^\delta$ are bounded on $L^p(\R^{d_1}\times\R^{d_2})$ uniformly in $t$ for $p$  in some symmetric interval $[p_\delta, p_\delta']$ around $2$. The value  $p_\delta$ depends of course on  $\delta$. Such questions have been studied for the Euclidean Laplacian in which case the optimality of $\delta$ is known but the optimality of  $p$ is a  celebrate open problem, known as the Bochner-Riesz problem. See  \cite{St2}, p. 420 and \cite{T1} for more details and recent progress on this problem.

Starting from the result quoted above from \cite{MM} and  \cite{MS},   one  can use complex interpolation between $L^2$ boundedness for  any $\delta>0$ and $L^1$ boundedness for a fixed  $\delta >(d_1+d_2)/2-1/2$ to obtain  that for  $\delta>(d_1+d_2-1)|1/p-1/2|$, $(1-tL)_+^\delta$ are bounded on $L^p(\R^{d_1}\times\R^{d_2})$ uniformly in $t$. Note however that this strategy does not give the  optimal exponent. For example, when $L=-\Delta$  on $\R^n$, $(1+t\Delta)_+^\delta$ are bounded on $L^p(\R^{n})$ uniformly   when $\delta>\max\{n|1/p-1/2|-1/2,0\}$ for $1\leq p\leq (2n+2)/(n+3)$, which is better than the interpolation approach   which leads to $\delta>(n-1)|1/p-1/2|$.
The  sharpened  result for the Laplacian, i.e.,  $\delta>\max\{n|1/p-1/2|-1/2,0\}$ for $1\leq p\leq (2n+2)/(n+3)$,  is obtained by the restriction theorem for the Fourier transform on the unit sphere.
 In an abstract setting, versions of the restriction estimate are introduced  in \cite{COSY} and we are tempted 
  to follow  \cite{COSY}  in order to prove boundedness of Bochner-Riesz means for $L$. There is however an obstacle. The restriction type estimate introduced in \cite{COSY}
leads  to spectral multipliers using ``the"  homogeneous dimension $Q=d_1+2d_2$  rather than the topological one $d_1 + d_2$.
The   exponent we will get for the Bochner-Riesz means  is then $\max\{Q|1/p-1/2|-1/2,0\}$.  The problem of getting sharp spectral multipliers using  the topological dimension rather than the homogeneous one appeared already in the case of the Heisenberg group. See \cite{Heb} and \cite{MuSt}.

Our strategy to deal with this problem is  to use a   weighted  version of restriction  estimates  for the operator $L$. More precisely,
let $F$ be a bounded Borel function with support  $\support F$ contained in $[R/4,R]$ for some $R>0$. Then for $1\leq p\leq \min\{2d_1/(d_1+2),(2d_2+2)/(d_2+3)\}$ and $0\leq \gamma< d_2(1/p-1/2)$, we prove that
\begin{eqnarray*}
\||x'|^{\gamma}F(\sqrt{L})f\|_{L^2(\R^{d_1}\times \R^{d_2})}
\leq C R^{(2d_2+d_1)(1/p-1/2)-\gamma}\|\delta_R F\|_{L^2(\R)}\|f\|_{L^p(\R^{d_1}\times \R^{d_2})}.
\end{eqnarray*}
Using this weighted restriction type estimate, we prove  sharp spectral multiplier results  and optimal Bochner-Riesz summability
stated in Theorems~\ref{thm1m} and \ref{thm2r} below. We set
 $$D :=\max\{d_1+d_2,2d_2\}$$
 and denote as usual  $W_2^s$ the  $L^2$ Sobolev space of order $s$ with
$ \| F \|_{W_2^s} := \| (I-d_x^2)^{s/2}F  \|_2$. Throughout,   $\eta$ is an auxiliary and non trivial   $C^\infty$ function with compact support contained in $(0, \infty)$.

\begin{theorem}\label{thm1m}
Let  $1\leq p\leq \min\{2d_1/(d_1+2),(2d_2+2)/(d_2+3)\}$. Suppose that the bounded Borel function $F : \R \to \mathbb{C}$ satisfies
\begin{equation*}
\sup_{t>0} \|\eta \,  F(t\cdot)  \|_{W_2^s}  < \infty
\end{equation*}
for some $s >\max\{ D|1/p-1/2|,1/2\}$. Then the spectral multiplier operator $F(L)$ is bounded on
 $L^p(\R^{d_1}\times\R^{d_2})$. In addition
\begin{equation*}
 \|F(L)\|_{L^p \to L^{p}} \leq C_p \sup_{t>0} \|\eta \,  F (t\cdot) \|_{W_2^s}.
\end{equation*}

\end{theorem}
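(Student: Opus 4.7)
The plan is to deduce the theorem from the weighted restriction estimate by a two-level dyadic decomposition---in the spectrum and in the spatial variable $|x'|$---using finite speed of propagation for the wave operator $\cos(t\sqrt{L})$ to localize kernels in the Grushin control distance.

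First I fix a smooth partition $\sum_k\psi(2^{-k}\cdot)\equiv 1$ on $(0,\infty)$ with $\psi$ supported in $[1/2,2]$ and set $F_k(\lambda)=\psi(2^{-k}\lambda)F(\lambda)$; the hypothesis gives $\sup_k \|\eta F_k(2^k\cdot)\|_{W_2^s}<\infty$. By a standard summation argument (slightly fattened partition combined with the trivial $L^2$-bound from the spectral theorem), the theorem reduces to proving the one-scale estimate
\begin{equation*}
\|F(\sqrt{L})\|_{L^p\to L^p}\leq C\|\delta_R F\|_{W_2^s}
\end{equation*}
for every $R>0$ and every $F$ with $\support F\subset[R/4,R]$, whenever $s>\max\{D(1/p-1/2),1/2\}$.

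To prove this one-scale estimate I apply the weighted restriction estimate with $\gamma$ taken just below $d_2(1/p-1/2)$, obtaining
\begin{equation*}
\||x'|^\gamma F(\sqrt{L})f\|_2\leq CR^{(d_1+d_2)(1/p-1/2)+\varepsilon}\|\delta_R F\|_2\|f\|_p,
\end{equation*}
which already contains the \emph{topological}-dimension exponent $(d_1+d_2)(1/p-1/2)$ up to $\varepsilon$. To upgrade this weighted $L^p\to L^2$ bound into an $L^p\to L^p$ bound I decompose $\R^{d_1}\times\R^{d_2}$ into dyadic shells $A_j=\{2^{j-1}/R\leq|x'|\leq 2^{j+1}/R\}$, on which $|x'|^\gamma$ is essentially the constant $(2^j/R)^\gamma$. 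Representing $F(\sqrt{L})=\tfrac{1}{\pi}\int \hat{F}(t)\cos(t\sqrt{L})\,dt$ and truncating to $|t|\lesssim 1/R$, finite speed of propagation localizes the kernel to a $1/R$-neighborhood of the diagonal in the Grushin distance; the truncation error is absorbed by the hypothesis $s>1/2$. Grushin balls centered at points of $A_j$ have volume of order $(2^j/R)^{d_2}R^{-(d_1+d_2)}$ for $j\geq 0$ and of order $R^{-(d_1+2d_2)}$ for $j<0$. H\"older's inequality applied shell by shell then converts the weighted $L^p\to L^2$ bound into a local $L^p\to L^p$ bound, and the admissibility constraint $0\leq\gamma<d_2(1/p-1/2)$ yields a geometric sum in $j$ that converges from both sides.

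The main technical difficulty is the precise bookkeeping of exponents across the two dyadic decompositions in the anisotropic Grushin geometry: the weight $|x'|^\gamma$ must be calibrated so that the shell-volume factor produced by H\"older exactly compensates the gap between the homogeneous dimension $d_1+2d_2$ and the topological $d_1+d_2$. The appearance of $D=\max\{d_1+d_2,2d_2\}$ (rather than simply $d_1+d_2$) in the Sobolev order reflects the limit of this balancing procedure: when $d_2>d_1$, the interval $[0,d_2(1/p-1/2))$ for $\gamma$, together with the $p$-range condition, prevents the weight from completely closing the gap, and only the weaker exponent $2d_2(1/p-1/2)$ can be ensured in that regime.
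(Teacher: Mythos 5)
Your general strategy (reduce to a single spectral scale, then convert the weighted $L^p\to L^2$ restriction bound to an $L^p\to L^p$ bound by H\"older over a covering at scale $\sim 1/R$ produced by finite speed of propagation) is the one used in the paper, via Lemma~\ref{lem:pto2toptop} and Theorem~\ref{th:compactmultipliers}. However, there is a genuine gap in the shell-by-shell H\"older step, and a related misattribution of where $D$ comes from.

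You invoke only the global weighted restriction estimate~\eqref{eq:WeightedRE}. On a ball $B$ at scale $r\sim 1/R$ lying in the shell $|x'|\sim 2^j/R$ with $j\ge 0$, H\"older brings the factor $\||x'|^{-\gamma}\|_{L^q(B)}\sim (2^j/R)^{-\gamma}\,|B|^{1/p-1/2}$ with $|B|\sim R^{-(d_1+d_2)}(2^j/R)^{d_2}$, while \eqref{eq:WeightedRE} contributes $R^{(2d_2+d_1)(1/p-1/2)-\gamma}$. The $R$-powers cancel, but a residual factor $(2^j)^{d_2(1/p-1/2)-\gamma}$ survives; since you must keep $\gamma<d_2(1/p-1/2)$, this exponent is strictly positive and the sum over $j\to +\infty$ \emph{diverges}. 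The claim that the admissibility constraint makes the geometric sum converge ``from both sides'' is therefore false. What is needed -- and what the paper actually exploits -- is the second, localized estimate~\eqref{eq:WeightedRE2}, valid when $|y'|>4r$, which carries the additional decay $|y'|^{\gamma-d_2(1/p-1/2)}$; on the shells with large $|x'|$ this factor exactly cancels the growth of $\||x'|^{-\gamma}\|_{L^q(B)}$, and the sum becomes summable. Lemma~\ref{lem:pto2toptop} is built around precisely this dichotomy between $|y'|\le 4r$ and $|y'|>4r$, and it cannot be bypassed.

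A second point is missed: finiteness of $\||x'|^{-\gamma}\|_{L^q}$ on balls near $|x'|=0$ forces $q\gamma<d_1$, i.e.\ $\gamma<d_1(1/p-1/2)$, in addition to the constraint $\gamma<d_2(1/p-1/2)$ coming from the restriction estimate. The admissible range is $\gamma<\min\{d_1,d_2\}(1/p-1/2)$. When $d_2>d_1$ the binding constraint is $\gamma<d_1(1/p-1/2)$ -- not $\gamma<d_2(1/p-1/2)$ as you state -- and it is this constraint that leaves the exponent $(d_1+2d_2-d_1)(1/p-1/2)=2d_2(1/p-1/2)=D(1/p-1/2)$. If $\gamma$ could range up to $d_2(1/p-1/2)$, you would always get $(d_1+d_2)(1/p-1/2)$, which for $d_2>d_1$ is the paper's conjecture, not its theorem, so your own explanation should have flagged the inconsistency. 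Finally, the reduction to a single spectral scale is not a ``trivial $L^2$-bound plus summation'': the paper imports the nontrivial Theorem~\ref{th:compacttogeneral} (Theorem~3.3 of \cite{SYY}), which requires Davies--Gaffney and $(G_{p_0,2,m})$-type off-diagonal estimates; a plain triangle-inequality sum over scales diverges.
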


For   Bochner-Riesz means we prove
the following result.

\begin{theorem}\label{thm2r}
Let  $1\leq p\leq \min\{2d_1/(d_1+2),(2d_2+2)/(d_2+3)\}$. Suppose that $\delta > \max\{D|1/p-1/2|-1/2,0\}$. Then the Bochner-Riesz means $(1-t
L)_+^\delta$ are bounded on $L^p(\R^{d_1}\times\R^{d_2})$ uniformly in $t \in [
0,\infty)$.
\end{theorem}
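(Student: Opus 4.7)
The plan is to deduce Theorem~\ref{thm2r} from the spectral multiplier result Theorem~\ref{thm1m} by applying it to the family of Bochner-Riesz functions $F_t(\lambda) := (1-t\lambda)_+^\delta$, $t \geq 0$. The exponents match up cleanly: since $\delta > \max\{D|1/p-1/2|-1/2, 0\}$, we may choose $s$ with
\[
\max\{D|1/p-1/2|, 1/2\} < s < \delta + 1/2.
\]
For such an $s$, Theorem~\ref{thm1m} reduces the task to verifying the uniform bound
\[
\sup_{t>0}\,\sup_{r>0}\bigl\|\eta\, F_t(r\,\cdot\,)\bigr\|_{W_2^s(\R)} < \infty,
\]
which, after the substitution $a = rt$, is equivalent to
\[
\sup_{a>0}\bigl\|\eta(\lambda)\,(1-a\lambda)_+^\delta\bigr\|_{W_2^s(\R)} < \infty.
\]

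To establish this uniform bound I would fix a compact interval $[c_1, c_2] \subset (0,\infty)$ containing $\supp\eta$ and split the supremum according to the position of the singularity $\lambda = 1/a$ relative to $[c_1,c_2]$. For $1/a < c_1$ the function $\eta(\lambda)(1-a\lambda)_+^\delta$ vanishes identically, and for $1/a > 2c_2$ the factor $(1-a\lambda)_+^\delta$ restricted to $\supp\eta$ is smooth with all derivatives bounded uniformly in $a$; in both of these regimes the $W_2^s$ norm is trivially controlled. The only nontrivial regime is $1/a \in [c_1, 2c_2]$, where after a dilation $\mu = a\lambda$ the problem reduces to the classical one-variable fact that, for $s < \delta + 1/2$, the truncated power $(1-\mu)_+^\delta$ lies in $W_2^s(\R)$ when localized near $\mu = 1$; the compact support of $\eta$ provides the required localization and the uniform estimate follows.

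Since the analytic heart of the matter is already packaged into Theorem~\ref{thm1m} (which itself rests on the weighted restriction estimate), I expect this reduction to be essentially routine. The one delicate point is the sharp correspondence $s \leftrightarrow \delta + 1/2$: it is essential that the hypothesis of Theorem~\ref{thm1m} admits any $s$ strictly greater than $\max\{D|1/p-1/2|, 1/2\}$, for this is precisely what allows us to reach the sharp Bochner-Riesz exponent $\delta > \max\{D|1/p-1/2| - 1/2, 0\}$ rather than a strictly larger one.
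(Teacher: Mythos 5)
Your proposal is correct, and it follows a genuinely different (though closely related) route from the paper. The paper does not pass through Theorem~\ref{thm1m} at all: instead it splits the Bochner--Riesz multiplier as $\phi\cdot(1-\cdot)_+^\delta + (1-\phi)\cdot(1-\cdot)_+^\delta$ with $\phi$ a smooth cutoff equal to $1$ near the origin, handles the smooth piece $\phi(\lambda)(1-\lambda)_+^\delta$ by the ``classical'' spectral multiplier theorem for smooth compactly supported functions (\cite[Theorem 3.1]{DOS} or \cite[Theorem 3.1]{COSY}, which only need Gaussian heat kernel bounds), and handles the piece $(1-\phi(\lambda))(1-\lambda)_+^\delta$ --- which has compact support in $[1/4,1]$ and lies in $W_2^s$ for all $s<\delta+1/2$ --- directly by Theorem~\ref{th:compactmultipliers}. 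You instead feed the whole family $F_t(\lambda)=(1-t\lambda)_+^\delta$ into Theorem~\ref{thm1m}, verifying the uniform H\"ormander condition $\sup_{a>0}\|\eta(\,\cdot\,)(1-a\,\cdot\,)_+^\delta\|_{W_2^s}<\infty$ for $s<\delta+1/2$; that verification is standard and your sketch of it is adequate (the compactness of the range of relevant $a$, namely $1/a\in[c_1, c_2]$, is what makes the dilation step uniform). The relation between the two routes is that Theorem~\ref{thm1m} is itself obtained from Theorem~\ref{th:compactmultipliers} via Theorem~\ref{th:compacttogeneral}, which internally performs essentially the same cutoff-near-zero decomposition; so your argument repackages the paper's decomposition inside the already-proved headline multiplier theorem. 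Your route is conceptually shorter and more self-contained within the paper's own stated results; the paper's route avoids the full strength of Theorem~\ref{thm1m} and makes the role of the sharp $s\leftrightarrow\delta+1/2$ threshold visible at the level of the compactly supported piece. Both rely on the same engine, namely the weighted restriction estimate of Theorem~\ref{thm:WeightedRE} feeding into Theorem~\ref{th:compactmultipliers}.
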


Theorems~\ref{thm1m} and \ref{thm2r} are optimal  when $d_1\ge d_2$. In this case $D$  coincides with the topological dimension $d_1+d_2$ of $\R^{d_1}\times \R^{d_2}$.
By the elliptic property of $L$ in the region where $x'\neq 0$, one can use the transplantation argument described in \cite{KST} to deduce the sharpness of the above theorems from the fact that the exponent $D|1/p-1/2|-1/2$ is sharp for the classical Bochner-Riesz summability on $\R^D$. See also \cite{MS} and \cite{MM}.

\medskip

\noindent{\bf Conjecture.} We believe that the previous theorems are true with $D=d_1 + d_2$ instead of $D = \max(d_1 + d_2, 2d_2)$. As we mentioned above, if $p = 1$, the spectral multiplier theorem in  \cite{MM} is valid for  $s > \frac{1}{2}(d_1 + d_2)$. This means that the conjecture is true when $p = 1$.

\bigskip

Throughout, the symbols ``$c$" and ``$C$" will denote (possibly
different) positive constants that are independent of the essential
variables. The notation $A \sim B$ means that the quantities $A$ and $B$ satisfy $c A \le B \le C A$
for some positive constants $c$ and $C$.

\medskip
\section{Riemannian distance and the heat kernel estimates}
\label{sec:HeatKernelE} \setcounter{equation}{0}

Heat kernel bounds for  Grushin type operators have been proved in \cite{RS}.   Here we state  some  basic
results concerning the Riemannian distance associated with the Grushin  operator $L$
and recall the  Gaussian bound for the corresponding heat kernel.   

Recall that the Riemannian (quasi-)distance corresponding to the operator
$L$  can be defined by
\begin{equation*}
\rho(x,y)=\sup_{\psi\in \mathbb{D}}\,(\psi(x)-\psi(y))
\end{equation*}
for all $x = (x',x'') ,y = (y',y'') \in\R^{d_1}\times\R^{d_2}$
where
 \[
\mathbb{D}=\left\{\psi\in W^{1,\infty}(\R^{d_1}\times\R^{d_2}):\Big(\sum_{{{j}}=1}^{d_1}|\partial_{x'_{{j}}}\psi|^2 +
\Big(\sum_{{{j}}=1}^{d_1}|x'_{{j}}|^2 \Big)
\sum_{{{k}}=1}^{d_2}|\partial_{x''_{{k}}}\psi|^2\Big) \leq1\right\}
\;.
\]
For this distance $\rho$ and the  Lebesgue measure the  finite speed propagation property for the corresponding wave equation as well as
Gaussian estimates for the heat kernel of $L$ are satisfied. See \cite[Proposition 4.1]{RS} for more detailed
 discussion and references.

\begin{theorem}\label{th:Gaussian}
Let  $\rho$ be Riemannian distance associated with   the Grushin
operator $L$. Then for $x=(x',x'')$, $y=(y',y'') \in \R^{d_1}\times
\R^{d_2}$,
\begin{equation}\label{eqCD}
\rho(x,y) \sim |x' - y'| + \begin{cases}
\frac{|x''-y''|}{|x'| + |y'|} &\text{if $|x''-y''|^{1/2} \leq |x'| + |y'|$,}\\
|x''-y''|^{1/2} &\text{if $|x''-y''|^{1/2} \geq |x'| + |y'|$.}
\end{cases}
\end{equation}
Moreover the volume of the ball $B(x,r):=\{y\in \R^{d_1}\times \R^{d_2}:\, \rho(x,y)<r\}$ satisfies the following estimates
\begin{equation}\label{eqVr}
|B(x,r)| \sim r^{d_1+d_2} \max\{r,|x'|\}^{d_2},
\end{equation}
and in particular, for all $\lambda \geq 0$,
\begin{equation}\label{doubling}
|B(x,\lambda r)| \leq C (1+\lambda)^Q |B(x,r)|
\end{equation}
where $Q=d_1+2d_2$ is ``the''  homogenous dimension of the considered metric space.
Next, there exist constants $b,C > 0$ such that, for all $t > 0$, the integral kernel $p_t$ of the operator $\exp(-tL)$ satisfies  the following Gaussian bound
\begin{equation}\label{Gauss}
|p_t(x,y)| \leq C |B(y,t^{1/2})|^{-1} e^{-b \rho(x,y)^2/t}
\end{equation}
for all $x,y \in \R^{d_1}\times\R^{d_2}$.
\end{theorem}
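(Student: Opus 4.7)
The strategy is to establish the four assertions \eqref{eqCD}, \eqref{eqVr}, \eqref{doubling} and \eqref{Gauss} in turn, with the distance formula \eqref{eqCD} as the driver: \eqref{eqVr} reduces to a direct integration once the shape of the balls is understood, \eqref{doubling} is an elementary consequence of \eqref{eqVr}, and the Gaussian bound \eqref{Gauss} is imported from \cite{RS}.

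For \eqref{eqCD} I would identify $\rho$ with the Carnot--Carath\'eodory distance associated to the horizontal vector fields $X_j=\partial_{x'_j}$ ($1\le j\le d_1$) and $Y_k=|x'|\,\partial_{x''_k}$ ($1\le k\le d_2$) coming from the sum-of-squares representation of $L$. The upper bound ``$\lesssim$'' is obtained by constructing explicit horizontal curves from $x$ to $y$: first translate in $x'$ along the Euclidean segment (cost $|x'-y'|$), then translate in $x''$ at a speed controlled by the current value of $|x'|$. In the regime $|x''-y''|^{1/2}\le |x'|+|y'|$ one uses speed $\sim|x'|+|y'|$ for time $|x''-y''|/(|x'|+|y'|)$; in the regime $|x''-y''|^{1/2}\ge |x'|+|y'|$ one first detours out to a point where $|x'|\sim|x''-y''|^{1/2}$, translates in $x''$ for time $\sim|x''-y''|^{1/2}$, and returns. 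The matching lower bound ``$\gtrsim$'' comes by plugging carefully chosen Lipschitz test functions into the supremum defining $\rho$: the coordinate projections of $x'$ handle the first term of \eqref{eqCD}, while for the $x''$-part one tests against a truncated linear function in $x''$ with slope of order $(|x'|+|y'|+|x'-y'|)^{-1}$, whose membership in $\mathbb{D}$ is verified by a pointwise check.

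Once \eqref{eqCD} is in hand, \eqref{eqVr} follows by a Fubini computation split at the threshold $r\sim|x'|$. If $r\lesssim|x'|$, every $y\in B(x,r)$ satisfies $|y'|\sim|x'|$, and \eqref{eqCD} forces $|y'-x'|\lesssim r$ and $|y''-x''|\lesssim r|x'|$, contributing a factor $r^{d_1+d_2}|x'|^{d_2}$. If $r\gtrsim|x'|$, then $y'$ ranges over a Euclidean ball of radius $\sim r$ and for each such $y'$ the admissible $y''$ fills a ball of radius $\sim r\,\max(|y'|,r)\sim r^{2}$, giving the factor $r^{d_1+2d_2}$. Both cases collapse into $r^{d_1+d_2}\max(r,|x'|)^{d_2}$. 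The doubling bound \eqref{doubling} with $Q=d_1+2d_2$ is then immediate from the elementary inequalities $(\lambda r)^{d_1+d_2}\le (1+\lambda)^{d_1+d_2}r^{d_1+d_2}$ and $\max(\lambda r,|x'|)^{d_2}\le(1+\lambda)^{d_2}\max(r,|x'|)^{d_2}$.

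The Gaussian estimate \eqref{Gauss} is then invoked from \cite[Proposition 4.1]{RS}: in the self-adjoint setting, finite propagation speed of $\cos(t\sqrt{L})$ with respect to $\rho$ (proved there) yields Davies--Gaffney off-diagonal estimates for $e^{-tL}$, and these upgrade to the full Gaussian bound \eqref{Gauss} via a Nash inequality adapted to the doubling measure \eqref{doubling}. I expect the main obstacle to lie in the lower bound half of \eqref{eqCD}, specifically in producing test functions $\psi\in\mathbb{D}$ that saturate both regimes simultaneously across the transition $|x''-y''|^{1/2}\sim|x'|+|y'|$; once that calibration is settled, the remainder of the argument is bookkeeping on top of the distance formula.
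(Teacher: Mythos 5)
Your proposal is a genuine proof sketch, whereas the paper's own proof of this theorem is a one-line deferral to \cite[Proposition~5.1 and Corollary~6.6]{RS} with no argument reproduced.  Your reconstruction is broadly correct and matches in outline what Robinson--Sikora actually do: the distance formula via explicit horizontal curves for the upper bound and test functions in $\mathbb{D}$ for the lower bound, the volume asymptotics by integrating once the ball shape is known, and the Gaussian bound via finite propagation speed.  The case split in your volume computation is sound in both regimes ($r\lesssim|x'|$ giving $r^{d_1+d_2}|x'|^{d_2}$, $r\gtrsim|x'|$ giving $r^{d_1+2d_2}$), and the doubling estimate is immediate as you say.  Two caveats.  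First, you rightly single out the lower bound for $\rho$ as the delicate step: calibrating the test functions across the transition $|x''-y''|^{1/2}\sim|x'|+|y'|$ is precisely what makes \cite[Proposition~5.1]{RS} nontrivial, and a complete proof would have to carry it out rather than merely indicate it.  Second, your suggested passage from finite propagation speed to \eqref{Gauss} via Davies--Gaffney plus a Nash inequality adapted to \eqref{doubling} is a legitimate route but not obviously the one used in \cite{RS}; the paper's cited Corollary~6.6 should be consulted to confirm the mechanism.  In short, the paper buys the whole theorem by citation; your proposal buys a self-contained picture of why each piece holds, at the cost of leaving the two hardest technical steps as sketches.
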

\begin{proof} For the proof, we refer the reader to
\cite[Proposition 5.1 and Corollary 6.6]{RS}.
\end{proof}

\medskip

\section{Weighted restriction estimates}
\label{sec:WeightedRE} \setcounter{equation}{0}

In this section, we discuss the spectral decomposition of $L$  and then state and prove the weighted restriction estimate. 
\medskip

Let
$\mathcal{F} : L^2(\R^{d_1} \times \R^{d_2}) \to L^2(\R^{d_1} \times
\R^{d_2})$ be the partial Fourier transform in the variable $x''$, that is
\[
\mathcal{F} \phi(x',\xi)=\widehat{\phi}(x',\xi) = (2\pi)^{-d_2/2} \int_{\R^{d_2}}
\phi(x',x'') \, e^{-i \xi \cdot x''} \, dx''.
\]
 Then
\begin{equation}\label{eq03}
\mathcal{F} L\phi (x',\xi) = {L}_{\xi} \, \mathcal{F}
\phi(x',\xi),
\end{equation}
where ${L}_{\xi}$ is the Schr\"odinger  operator defined by
$$
{L}_{\xi}=-\Delta_{d_1}+|x'|^2|\xi|^2
$$
acting on $L^2(\R^{d_1})$ where $\xi\in \R^{d_2}$.  We  have the following proposition.
\begin{proposition}\label{prop:FL}
For any integrable function $F$ with compact support in $\R$, we have
$$
F(L)f(x',x'')=\mathcal{F}^{-1}(F(L_\xi)\widehat{f}(x',\xi))(x'').
$$
\end{proposition}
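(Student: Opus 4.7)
The plan is to realize the partial Fourier transform $\mathcal{F}$ as a unitary decomposition of $L$ into the direct integral of the fibre operators $L_\xi$, and then to invoke the standard fact that the Borel functional calculus commutes with such a decomposition.

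First, I would check that for every fixed $\xi \in \R^{d_2}$ the fibre operator $L_\xi = -\Delta_{d_1} + |x'|^2|\xi|^2$ is essentially self-adjoint on $\mathcal{S}(\R^{d_1})$; this is a rescaled harmonic oscillator, diagonalised explicitly by Hermite functions, so the family $\{L_\xi\}_{\xi\in\R^{d_2}}$ is a measurable field of self-adjoint operators on $L^2(\R^{d_1})$. Consequently the direct integral
\[
\widetilde L := \int^\oplus_{\R^{d_2}} L_\xi \, d\xi
\]
is a well-defined self-adjoint operator on $L^2(\R^{d_1}\times\R^{d_2}) \cong L^2(\R^{d_2}; L^2(\R^{d_1}))$.

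Next, I would use the pointwise identity \eqref{eq03}, which is valid on the Schwartz class $\mathcal{S}(\R^{d_1}\times\R^{d_2})$, to show that $\mathcal{F} L \mathcal{F}^{-1} = \widetilde L$. Since $L$ is essentially self-adjoint on $\mathcal{S}$ (a standard consequence of its definition via a nonnegative quadratic form with this domain as a form core) and $\mathcal{F}(\mathcal{S}) = \mathcal{S}$, the two self-adjoint operators $\mathcal{F} L \mathcal{F}^{-1}$ and $\widetilde L$ agree on a common core and therefore coincide as unbounded self-adjoint operators.

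With the unitary equivalence in hand, the spectral theorem yields $\mathcal{F} F(L)\mathcal{F}^{-1} = F(\widetilde L)$ for every bounded Borel $F$, while the standard fibre-wise functional calculus for direct integrals gives $F(\widetilde L)\phi(x',\xi) = (F(L_\xi)\phi(\cdot,\xi))(x')$ for a.e.\ $\xi$. Combining these two identities and applying $\mathcal{F}^{-1}$ in the $\xi$-variable yields exactly the claimed formula. For an integrable $F$ with compact support which is not bounded, the statement is interpreted through the same spectral calculus (both $F(L)$ and $F(L_\xi)$ being defined on their natural maximal domains), and the identity is obtained by approximating $F$ by bounded truncations $F_N = F\,\mathbf{1}_{\{|F|\le N\}}$ and passing to the strong limit. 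The main technical point is the unitary equivalence $\mathcal{F} L \mathcal{F}^{-1} = \widetilde L$, which relies on essential self-adjointness on $\mathcal{S}$; once this is established, the rest is a routine application of the direct-integral formalism.
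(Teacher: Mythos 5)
Your direct-integral strategy is a perfectly reasonable self-contained approach; the paper itself only cites \cite[Proposition 5]{MS} and \cite[Proposition 3.2]{CS} for this statement without giving details. However, there is one genuine gap. You assert that $L$ is essentially self-adjoint on $\mathcal{S}$ as ``a standard consequence of its definition via a nonnegative quadratic form with this domain as a form core.'' That implication is false in general: a form core need not be an operator core. For instance, $C_c^\infty(0,1)$ is a form core for the Dirichlet Laplacian on $(0,1)$, yet $-d^2/dx^2$ on that domain has deficiency indices $(2,2)$ and is therefore not essentially self-adjoint there. So the step ``$\mathcal{F}L\mathcal{F}^{-1}$ and $\widetilde L$ agree on a common core'' does not follow from what you cite and needs to be repaired before the rest of the argument can run.

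Two repairs are available. One is to invoke a proved essential self-adjointness result for Grushin-type operators rather than the spurious form-core argument (such results are available in the literature on degenerate elliptic operators of this kind; see \cite{RS} for the relevant operator theory). The cleaner route, which avoids essential self-adjointness altogether, is to carry out the identification $\mathcal{F}L\mathcal{F}^{-1} = \widetilde L$ at the level of quadratic forms. The form defining $L$ is $a(u) = \int \big(|\nabla_{x'}u|^2 + |x'|^2|\nabla_{x''}u|^2\big)\,dx'\,dx''$; the unitary $\mathcal{F}$ carries it onto $\hat a(\hat u) = \int \big(|\nabla_{x'}\hat u|^2 + |x'|^2|\xi|^2|\hat u|^2\big)\,dx'\,d\xi$, which is exactly the direct integral over $\xi$ of the forms of $L_\xi$. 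Since a nonnegative self-adjoint operator is uniquely determined by its closed quadratic form, this yields $\mathcal{F}L\mathcal{F}^{-1} = \widetilde L$ with no mention of operator cores. With that repair, the remainder of your proof --- fibre-wise functional calculus on the direct integral, followed by bounded truncation to handle integrable but unbounded $F$ --- is standard and correct.
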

\begin{proof}
This equality is  essentially proved in \cite[Proposition 5]{MS}. Alternatively, we can follow  the approach used in the proof of Proposition 3.2 in \cite{CS} for a direct proof.
\end{proof}

Next we turn to the spectral decomposition of the operator $L_\xi$ on $\R^{d_1}$. Let $L_1=-\Delta_{d_1}+|x'|^2$ be the harmonic oscillator on $\R^{d_1}$, $\nu$ be a multi-index and $\Phi_\nu(x')=h_{\nu_1}(x'_1)\cdots h_{\nu_{d_1}}(x'_{d_1})$, where $h_{\nu_j}$ is the Hermite function of order~$\nu_j$. Recall that $2|\nu|+d_1$ and  $\Phi_\nu$ are the eigenvalues and eigenfunctions of the operator $L_1$. Thus  $(2|\nu|+d_1)|\xi|$ and  $\Phi^\xi_\nu(x')=|\xi|^{d_1/4}\Phi_{\nu}(\sqrt{|\xi|}x')$ are the eigenvalues and eigenfunctions of the operator $L_\xi$; see \cite{MS}. Then we have
$$
L_\xi f=\sum_{k=0}^{\infty} (2k+d_1)|\xi|\sum_{|\nu|=k}\langle f,\Phi_\nu^\xi \rangle \Phi_\nu^\xi
$$
and
$$
F(L_\xi) f=\sum_{k=0}^{\infty} F((2k+d_1)|\xi|)\sum_{|\nu|=k}\langle f,\Phi_\nu^\xi \rangle \Phi_\nu^\xi.
$$

We have the following restriction type estimate for $L_\xi$.

\begin{proposition}\label{prop:REforLxi}
Suppose $d_1\geq 2$. For $1\leq p\leq 2d_1/(d_1+2)$,
\begin{eqnarray}\label{eq:ReforLxi}
\|\sum_{|\nu|=k}\langle f,\Phi_\nu^\xi \rangle \Phi_\nu^\xi\|_{L^2(\R^{d_1})}\leq C |\xi|^{\frac{d_1}{2}(\frac{1}{p}-\frac{1}{2})}(2k+d_1)^{\frac{d_1}{2}(\frac{1}{p}-\frac{1}{2})-\frac{1}{2}}\|f\|_{L^p(\R^{d_1})}.
\end{eqnarray}
\end{proposition}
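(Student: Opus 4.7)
\textbf{Proof plan for Proposition~\ref{prop:REforLxi}.} The strategy is to reduce by a simple dilation to the case $|\xi|=1$, and then to invoke the classical restriction type estimate of Karadzhov for the Hermite spectral projections on $\R^{d_1}$. Concretely, the Hermite estimate I have in mind is: if $P_k := \sum_{|\nu|=k}\langle \cdot,\Phi_\nu\rangle\Phi_\nu$ denotes the $L^2$-projection onto the $k$-th eigenspace of $L_1=-\Delta_{d_1}+|x'|^2$, then for $d_1\ge 2$ and $1\le p\le 2d_1/(d_1+2)$,
\begin{equation*}
\|P_k g\|_{L^2(\R^{d_1})} \le C (2k+d_1)^{\frac{d_1}{2}(\frac{1}{p}-\frac{1}{2})-\frac{1}{2}}\|g\|_{L^p(\R^{d_1})}.
\end{equation*}
This is precisely the range where the Stein--Tomas type exponent appears, and in this range it is the sharp bound.

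Now the plan is to transfer this to $L_\xi$. Starting from the definition $\Phi_\nu^\xi(x')=|\xi|^{d_1/4}\Phi_\nu(\sqrt{|\xi|}\,x')$, I introduce the dilation $T_\xi g(x'):=|\xi|^{d_1/4}g(\sqrt{|\xi|}\,x')$, which is an isometry of $L^2(\R^{d_1})$ and satisfies $\Phi_\nu^\xi=T_\xi\Phi_\nu$. A direct change of variables gives the two scaling identities
\begin{equation*}
\langle f,\Phi_\nu^\xi\rangle = |\xi|^{-d_1/4}\langle T_\xi^{-1}f,\Phi_\nu\rangle,\qquad \|T_\xi^{-1}f\|_{L^p(\R^{d_1})}=|\xi|^{\frac{d_1}{2p}}\|f\|_{L^p(\R^{d_1})},
\end{equation*}
so that, writing $P_k^\xi f := \sum_{|\nu|=k}\langle f,\Phi_\nu^\xi\rangle\Phi_\nu^\xi$, one obtains the intertwining $P_k^\xi f = T_\xi (P_k (T_\xi^{-1}f))$.

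Combining these identities with the $L^2$-isometry property of $T_\xi$ reduces the claim to the Hermite estimate applied to $T_\xi^{-1}f$. Collecting the $|\xi|$-factors yields a factor $|\xi|^{\frac{d_1}{2}(\frac{1}{p}-\frac{1}{2})}$, while the Hermite restriction estimate supplies the $(2k+d_1)^{\frac{d_1}{2}(\frac{1}{p}-\frac{1}{2})-\frac{1}{2}}$ factor, which is exactly \eqref{eq:ReforLxi}.

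The main obstacle is in principle the Hermite restriction estimate itself; I would quote it from the literature (Karadzhov, or Koch--Tataru) rather than reprove it. Everything else is a bookkeeping of the dilation $T_\xi$, which is straightforward. It is worth noting that the hypothesis $d_1\ge 2$ is inherited directly from the Hermite result, and the range $1\le p\le 2d_1/(d_1+2)$ is precisely the Stein--Tomas range there.
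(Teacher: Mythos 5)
Your approach is exactly the one taken in the paper: both invoke the Koch--Tataru Hermite spectral projection estimate (\cite[Corollary~3.2]{KT}) and then transfer it to $L_\xi$ by the parabolic dilation $x'\mapsto\sqrt{|\xi|}\,x'$, which is what your operator $T_\xi$ packages. One small bookkeeping slip: with $T_\xi g=|\xi|^{d_1/4}g(\sqrt{|\xi|}\,\cdot)$ one has $\langle f,\Phi_\nu^\xi\rangle=\langle T_\xi^{-1}f,\Phi_\nu\rangle$ (no extra $|\xi|^{-d_1/4}$) and $\|T_\xi^{-1}f\|_{L^p}=|\xi|^{\frac{d_1}{2}(\frac1p-\frac12)}\|f\|_{L^p}$; your two displayed scaling identities each carry a stray $|\xi|^{\mp d_1/4}$ that happen to cancel, and the intertwining $P_k^\xi=T_\xi P_k T_\xi^{-1}$ and the conclusion are correct as stated.
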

\begin{proof}
From \cite[Corollary 3.2]{KT} we have for $1\leq p\leq 2d_1/(d_1+2)$,
$$
\|\sum_{|\nu|=k}\langle f,\Phi_\nu \rangle \Phi_\nu \|_{L^2}\leq C
(2k+d_1)^{\frac{d_1}{2}(\frac{1}{p}-\frac{1}{2})-\frac{1}{2}} \|f\|_{L^p}.
$$
Then changing variables from the above inequality implies
\begin{eqnarray*}
\|\sum_{|\nu|=k}\langle f,\Phi_\nu^\xi \rangle \Phi_\nu^\xi\|_{L^2}
&=& \|\sum_{|\nu|=k}\left\langle f,|\xi|^{d_1/4}\Phi_{\nu}(\sqrt{|\xi|}\cdot) \right\rangle |\xi|^{d_1/4}\Phi_{\nu}(\sqrt{|\xi|}x')\|_{L^2}\\
&=& \|\sum_{|\nu|=k}\left\langle f(\frac{\cdot}{\sqrt{|\xi|}}),\Phi_{\nu}(\cdot) \right\rangle \Phi_{\nu}(\sqrt{|\xi|}x')\|_{L^2}\\
&=& \|\sum_{|\nu|=k}\left\langle f(\frac{\cdot}{\sqrt{|\xi|}}),\Phi_{\nu}(\cdot) \right\rangle \Phi_{\nu}(x')\|_{L^2}|\xi|^{-d_1/4}\\
&\leq& C |\xi|^{-d_1/4} (2k+d_1)^{\frac{d_1}{2}(\frac{1}{p}-\frac{1}{2})-\frac{1}{2}} \|f(\frac{\cdot}{\sqrt{|\xi|}})\|_{L^p}\\
&\leq&  C |\xi|^{\frac{d_1}{2}(\frac{1}{p}-\frac{1}{2})}(2k+d_1)^{\frac{d_1}{2}(\frac{1}{p}-\frac{1}{2})-\frac{1}{2}}\|f\|_{L^p(\R^{d_1})}.
\end{eqnarray*}
\end{proof}

In order to prove the weighted restriction  estimate  for $L$, we need the following proposition, which is essentially the same as \cite[Proposition 4]{MS}.

\begin{proposition}\label{prop:WeightedEforLxi}
Let $\gamma\in [0,\infty)$ and $f\in L^2(\R^{d_1})$. Then
$$
\||x'|^\gamma f\|_{L^2(\R^{d_1})}\leq C_\gamma\||\xi|^{-\gamma}L_\xi^{\gamma/2} f\|_{L^2(\R^{d_1})}.
$$
Here $C_\gamma$ is  non-decreasing in  $\gamma$.
\end{proposition}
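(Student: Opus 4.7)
The plan is to reduce the inequality to a statement about the standard harmonic oscillator $L_1 = -\Delta_{d_1} + |y|^2$ via a unitary rescaling, and then to exploit the quadratic form inequality $|y|^2 \leq L_1$.

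Define the isometry $U_\xi : L^2(\R^{d_1}) \to L^2(\R^{d_1})$ by $(U_\xi h)(x') := |\xi|^{d_1/4} h(\sqrt{|\xi|}\,x')$. A direct computation gives the intertwining relation $L_\xi U_\xi = |\xi|\, U_\xi L_1$, so that $L_\xi^{\gamma/2} U_\xi = |\xi|^{\gamma/2}\, U_\xi L_1^{\gamma/2}$ by the spectral theorem. Writing $f = U_\xi h$ and changing variables $y = \sqrt{|\xi|}\,x'$ in the integrals, one checks
\begin{equation*}
\||x'|^\gamma f\|_{L^2}^2 = |\xi|^{-\gamma}\,\||y|^\gamma h\|_{L^2}^2, \qquad \||\xi|^{-\gamma} L_\xi^{\gamma/2} f\|_{L^2}^2 = |\xi|^{-\gamma}\,\|L_1^{\gamma/2} h\|_{L^2}^2.
\end{equation*}
Hence the proposition is equivalent to $\||y|^\gamma h\|_{L^2(\R^{d_1})} \leq C_\gamma \|L_1^{\gamma/2} h\|_{L^2(\R^{d_1})}$, with the same constant $C_\gamma$. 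In the range $\gamma \in [0,1]$ this follows at once from the form inequality $|y|^2 \leq L_1$ (which is immediate since $L_1 - |y|^2 = -\Delta \geq 0$) and the L\"owner--Heinz inequality, which yields $|y|^{2\gamma} \leq L_1^\gamma$, i.e.\ the desired bound with $C_\gamma = 1$.

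The main obstacle is the regime $\gamma > 1$, where L\"owner--Heinz is not applicable. My plan is to first handle integer exponents $\gamma = k \in \mathbb{N}$ by induction. For $k = 2$, expanding $\|L_1 h\|_{L^2}^2 = \langle (-\Delta + |y|^2)^2 h, h\rangle$ and integrating by parts gives an identity of the form
\begin{equation*}
\|L_1 h\|_{L^2}^2 = \|\Delta h\|_{L^2}^2 + 2\int_{\R^{d_1}} |y|^2 |\nabla h|^2\,dy + \||y|^2 h\|_{L^2}^2 - 2 d_1 \|h\|_{L^2}^2,
\end{equation*}
whence $\||y|^2 h\|_{L^2}^2 \leq \|L_1 h\|_{L^2}^2 + 2 d_1 \|h\|_{L^2}^2 \leq C \|L_1 h\|_{L^2}^2$ (using $L_1 \geq d_1 I$). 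Higher integer powers are treated inductively by controlling commutators of the form $[-\Delta, |y|^{2j}]$ and absorbing lower-order terms by the same spectral gap. Non-integer $\gamma > 1$ are then obtained either by iterating L\"owner--Heinz between consecutive integers or, more robustly, by Stein's theorem for the analytic family $T_z := |y|^z L_1^{-z/2}$, for which $T_{it}$ is bounded on $L^2$ with admissible growth in $t$ (since $L_1^{-it/2}$ is unitary and $||y|^{it}| = 1$). Monotonicity of $C_\gamma$ is then secured by setting $C_\gamma := \sup_{\gamma' \leq \gamma} C_{\gamma'}$. The delicate bookkeeping of the commutator error terms in the inductive step is, I expect, the most technical part of the argument; this is essentially the content of Proposition~4 of \cite{MS}, which one may alternatively invoke directly.
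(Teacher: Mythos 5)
Your proposal is correct and follows essentially the same route as the paper: after reducing to a single model operator, one proves the estimate for integer exponents by an explicit expansion-of-the-square computation, extends it by iteration to all integers, and then interpolates to fractional exponents; the paper does the integer calculation for the one-dimensional oscillator $H=-d^2/du^2+u^2$ and cites \cite{G} and \cite{CS} for the iteration and interpolation (and indeed acknowledges that the whole statement is essentially \cite[Proposition~4]{MS}). The one refinement you add is the clean unitary conjugation $U_\xi$, which makes the passage from $L_\xi$ to $L_1$ explicit and scales out the $|\xi|^{-\gamma}$ factor in a single step, rather than carrying the $\xi$-dependence through the computation; this is cosmetic. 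A small caveat: your parenthetical suggestion to handle fractional $\gamma\in(k,k+1)$ by ``iterating L\"owner--Heinz between consecutive integers'' does not quite work, since L\"owner--Heinz interpolates a single inequality $A\le B$ down to $A^\theta\le B^\theta$ with $\theta\in[0,1]$ and cannot combine the two distinct endpoint inequalities $|y|^{2k}\le C_k L_1^{k}$ and $|y|^{2(k+1)}\le C_{k+1}L_1^{k+1}$; however, your fallback via Stein interpolation of the analytic family $T_z=|y|^{z}L_1^{-z/2}$, using unitarity of $|y|^{it}$ and $L_1^{-it/2}$ on the imaginary axis and the integer bounds on the lines $\operatorname{Re} z=k$, is the correct mechanism and closes the gap.
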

\begin{proof}
Let $H$ be the harmonic oscillator $-d^2/du^2+u^2$ on $\R$. It is obvious that
$$
\||u|f\|_{L^2(\R)}\leq  \|H^{1/2}f\|_{L^2(\R)}.
$$
In addition since the first eigenvalue of $H$ is bigger than 1, 
\begin{eqnarray*}
\|\frac{d^2}{du^2}f\|_2^2 + \|u^2 f\|_2^2 &\leq&
\|(-\frac{d^2}{du^2}+u^2)f\|_2^2-2Re\langle
-\frac{d^2}{du^2}f, u^2f\rangle\\
&\leq & \|Hf\|_2^2-2Re\langle \frac{d}{du}f,
2uf\rangle-2\|u\frac{d}{du}f\|_2^2\\
&\leq & \|Hf\|_2^2-2Re\langle \frac{d}{du}f, 2uf\rangle\\
&\leq & \|Hf\|_2^2+4\|\frac{d}{du}f\|_2\|uf\|_2\\
&\leq & \|Hf\|_2^2+4\|H^{1/2}f\|_2\|H^{1/2}f\|_2\\
&\leq &5 \|Hf\|_2^2.
\end{eqnarray*}
This implies  that
$$
\|u^2f\|_{L^2(\R)}\leq \sqrt{5} \|Hf\|_{L^2(\R)}.
$$
By iteration, we can prove that for $k\in\mathbb{N}$,
$$
\|u^kf\|_{L^2(\R)}\leq C_k\|H^{k/2}f\|_{L^2(\R)}.
$$
For  details, we refer  to Proposition 3.2 and 3.3 in \cite{G}. Now by a similar approach as in  in the proof of Proposition 2.2 in \cite{CS}, we can prove Proposition~\ref{prop:WeightedEforLxi}.
\end{proof}

We  state our weighted restriction  estimate for the Grushin operator $L$.

\begin{theorem}\label{thm:WeightedRE}
Let $F$ be a Borel function with $\support F\subset [R/4,R]$ for some $R>0$. Then for $1\leq p\leq \min\{2d_1/(d_1+2),(2d_2+2)/(d_2+3)\}$ and
$0\leq \gamma<d_2(1/p-1/2)$,
\begin{eqnarray}\label{eq:WeightedRE}
\||x'|^{\gamma}F(\sqrt{L})f\|_{L^2(\R_{x'}^{d_1}\times \R_{x''}^{d_2})}
\leq C R^{(2d_2+d_1)(\frac{1}{p}-\frac{1}{2})-\gamma}\|\delta_R F\|_{L^2(\R)}\|f\|_{L^p(\R^{d_1}\times \R^{d_2})}.
\end{eqnarray}
Moreover,  when $|y'|>4r$,
\begin{eqnarray}\label{eq:WeightedRE2}
&&\||x'|^\gamma F(\sqrt{L})P_{B(y,r)}f\|_{L^2(\R_{x'}^{d_1}\times
\R_{x''}^{d_2})} \leq  C
R^{(d_2+d_1)(\frac{1}{p}-\frac{1}{2})}|y'|^{\gamma-d_2(\frac{1}{p}-\frac{1}{2})}\|\delta_R
F\|_{L^2} \|f\|_{L^p(\R^{d_1}\times\R^{d_2})},
\end{eqnarray}
where $y = (y', y'') \in \R^{d_1}\times\R^{d_2}$ and $P_{B(y,r)}$ is the projection on the ball $B(y,r)$ of  $\R^{d_1}\times\R^{d_2}$
for distance $\rho$.
\end{theorem}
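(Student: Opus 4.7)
The plan is to reduce both estimates to restriction-type problems for the one-parameter family of harmonic oscillators $\{L_\xi\}_{\xi \in \R^{d_2}}$ on $\R^{d_1}$ and then to apply classical Stein--Tomas restriction on the sphere $S^{d_2-1}\subset \R^{d_2}$ in the partial Fourier variable $\xi$. The two already-established $x'$-side inputs are the weighted inequality of Proposition~\ref{prop:WeightedEforLxi} and the spectral restriction estimate of Proposition~\ref{prop:REforLxi}; the $\xi$-side is handled by Minkowski (valid since $p\leq 2$) and Stein--Tomas (valid since $p\leq (2d_2+2)/(d_2+3)$).

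For the first estimate, Proposition~\ref{prop:FL} together with Plancherel in $x''$ gives
\[
\||x'|^\gamma F(\sqrt L)f\|_{L^2}^2=\int_{\R^{d_2}}\||x'|^\gamma F(\sqrt{L_\xi})\widehat f(\cdot,\xi)\|_{L^2_{x'}}^2\,d\xi.
\]
Proposition~\ref{prop:WeightedEforLxi} absorbs the weight $|x'|^\gamma$ into the operator at the cost of a factor $|\xi|^{-\gamma}$, replacing $F$ by $G(\lambda)=\lambda^\gamma F(\lambda)$, which is still supported in $[R/4,R]$. Expanding $G(\sqrt{L_\xi})\widehat f(\cdot,\xi)$ in the Hermite eigenbasis of $L_\xi$, using orthogonality of the projections $\Pi_k^\xi$, and applying Proposition~\ref{prop:REforLxi} bounds each eigenspace contribution in $L^p_{x'}$ at the cost of explicit powers of $|\xi|$ and $(2k+d_1)$. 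Passing to polar coordinates $\xi=\rho\omega$, Minkowski's inequality swaps the $L^p_{x'}$ and $L^2(S^{d_2-1})$ norms, and classical Stein--Tomas on $S^{d_2-1}$ yields
\[
\int_{S^{d_2-1}}\|\widehat f(\cdot,\rho\omega)\|_{L^p_{x'}}^2\,d\omega\leq C\rho^{2d_2(1/p-1/2)-d_2}\|f\|_{L^p}^2.
\]
The substitution $s=(2k+d_1)\rho$ decouples the $k$-sum from the $\rho$-integral; the resulting series $\sum_k(2k+d_1)^{2\gamma-2d_2(1/p-1/2)-1}$ converges precisely under the hypothesis $\gamma<d_2(1/p-1/2)$. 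The remaining $s$-integral, combined with the identity $\|F\|_{L^2}^2=R\|\delta_R F\|_{L^2}^2$ arising from the support of $F$, delivers the claimed power $R^{(d_1+2d_2)(1/p-1/2)-\gamma}$.

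For the second estimate, the hypothesis $|y'|>4r$ together with the distance asymptotics of Theorem~\ref{th:Gaussian} forces $|x'|\sim|y'|$ on $B(y,r)$ and restricts the $x''$-diameter of the ball to at most $Cr|y'|$. I would run the same chain of reductions as above, but replace the angular Stein--Tomas step by a hybrid bound which, on the range $\rho\lesssim 1/(r|y'|)$ where Stein--Tomas is inefficient, exploits the compact $x''$-support of $P_{B(y,r)}f$ via the trivial estimate $|\widehat f(x',\xi)|\leq \|f(x',\cdot)\|_{L^1_{x''}}\leq (r|y'|)^{d_2/p'}\|f(x',\cdot)\|_{L^p_{x''}}$. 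Heuristically this reflects the fact that on $|x'|\sim|y'|$, after the rescaling $\tilde x''=x''/|y'|$, the Grushin operator becomes comparable to the Euclidean Laplacian on $\R^{d_1+d_2}$, for which the classical restriction theorem supplies the exponent $(d_1+d_2)(1/p-1/2)$; balancing the two angular bounds and tracking the Jacobian of the rescaling produces the additional factor $|y'|^{\gamma-d_2(1/p-1/2)}$ that distinguishes \eqref{eq:WeightedRE2} from \eqref{eq:WeightedRE}.

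The principal technical point for the first estimate is identifying $\gamma<d_2(1/p-1/2)$ as exactly the summability threshold for the $k$-series, which pins down the admissible range of $\gamma$. For the second estimate the harder step is calibrating the split between the Stein--Tomas regime and the compact-support regime so that the asserted $R$- and $|y'|$-exponents appear sharply and uniformly across all scales, in particular in the intermediate range of $|\xi|$ where neither extreme bound is individually optimal.
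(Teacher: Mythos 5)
For the first estimate \eqref{eq:WeightedRE}, your plan is essentially the paper's: reduce via Proposition~\ref{prop:FL} and Plancherel in $x''$, absorb the weight through Proposition~\ref{prop:WeightedEforLxi}, expand in the Hermite eigenbasis of $L_\xi$ and use orthogonality, then estimate with Proposition~\ref{prop:REforLxi} on the $x'$-side and a restriction estimate on the $\xi$-side. The only cosmetic difference is that you unpack the $\xi$-side step as explicit polar coordinates plus Stein--Tomas on $S^{d_2-1}$, whereas the paper observes that $\widetilde{G}_{k,x'}$ is radial and invokes the abstract restriction-type estimate for $-\Delta_2$ on $\R^{d_2}$ (which is proved by the very argument you sketch). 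Either way the $k$-series converges precisely under $\gamma<d_2(1/p-1/2)$, and the $R$-power comes out the same.

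For the second estimate \eqref{eq:WeightedRE2}, however, your proposal diverges from the paper and has a real gap. You propose a hybrid split in the \emph{frequency} variable $\rho=|\xi|$ around the threshold $\rho\sim 1/(r|y'|)$, with a crude compact-$x''$-support bound on $\widehat f(x',\xi)$ at low frequencies and Stein--Tomas at high frequencies, motivated by a heuristic that restricted to $|x'|\sim|y'|$ the Grushin operator looks like an elliptic operator on $\R^{d_1+d_2}$. The paper does something genuinely different: after the change of variables, the constraint $\operatorname{supp}G\subset[R^2/16,R^2]$ ties the two scales as $m|\xi|\sim R^2$, and the split is taken in the \emph{eigenvalue index} $m=2k+d_1$ at $m\sim\sqrt t\,|y'|$. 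For $m\gtrsim R|y'|$ the Koch--Tataru bound of Proposition~\ref{prop:REforLxi} already produces the gain $|y'|^{2\gamma-2d_2(1/p-1/2)}$ from the tail of the convergent series. For $m\lesssim R|y'|$ the crucial ingredient is the Gaussian decay of the rescaled Hermite profile $\sum_{|\nu|=k}|\Phi_\nu(u)|^2\le C e^{-c|u|^2}$ when $|u|^2\ge 2(2k+d_1)$ (Lemma~8 of \cite{MS}): since on $\operatorname{supp}g$ one has $|x'|\ge|y'|/2$ and in this regime $m|\xi|^{-1}\lesssim|y'|^2$, the projectors $\Pi_k^\xi$ are exponentially small on the support of $g$, and the resulting sum is bounded by a constant. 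Your sketch never invokes this eigenfunction decay, and I do not see how the compact-$x''$-support bound on $\widehat f$ can substitute for it: after the Hermite expansion the quantity to be controlled is $\|\Pi_k^\xi g\|_{L^2_{x'}}$, which lives entirely in the $x'$-variable and is not reached by bounding $\widehat f(x',\xi)$ in $\xi$. Moreover, your proposed split at $\rho\sim 1/(r|y'|)$ introduces an $r$-dependence that must cancel in the final $r$-free bound, and you give no mechanism for this; the paper's split is $r$-independent. At best your heuristic explains \emph{why} estimate \eqref{eq:WeightedRE2} should hold, but the argument as written would not close without the Hermite decay lemma (or an equivalent input).
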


\begin{proof}
When $p=1$, this theorem is proved in \cite[Proposition 10]{MS}. So in what follows we may  assume that $d_1 > 2$.
Let $G(x)=F(\sqrt{x})$. Then $\support G\subset [R^2/16,R^2]$.
By a density argument, it  is enough  to prove the estimates~\eqref{eq:WeightedRE} and \eqref{eq:WeightedRE2} for functions $f\in L^2(\R^{d_1}\times \R^{d_2})\cap L^p(\R^{d_1}\times \R^{d_2})$ such that $f(x',x'')=g(x')h(x'')$ where $g\in L^2(\R^{d_1})\cap L^p(\R^{d_1}) $
and $h\in L^2(\R^{d_2})\cap L^p(\R^{d_2})$.

By Proposition~\ref{prop:FL} and Plancherel equality,
\begin{eqnarray}\label{eq:We1}
\||x'|^\gamma F(\sqrt{L})f\|_{L^2(\R_{x'}^{d_1}\times \R_{x''}^{d_2})}^2 & = & \|\mathcal{F}^{-1}(|x'|^\gamma G(L_\xi)g(x')\widehat{h}(\xi))(x'')\|_{L^2(\R_{x'}^{d_1}\times \R_{x''}^{d_2})}^2\nonumber\\
& = & \||x'|^\gamma G(L_\xi)g(x')\widehat{h}(\xi)\|_{L^2(\R_{x'}^{d_1}\times \R_{\xi}^{d_2})}^2.
\end{eqnarray}
Then by Proposition~\ref{prop:WeightedEforLxi},
\begin{eqnarray*}
&&\||x'|^\gamma G(L_\xi)g(x')\widehat{h}(\xi)\|_{L^2(\R_{x'}^{d_1})}^2\\
&& \leq  \||\xi|^{-\gamma}L_\xi^{\gamma/2} G(L_\xi)g(x')\widehat{h}(\xi)\|_{L^2(\R_{x'}^{d_1})}^2\\
&&=  \||\xi|^{-\gamma}\sum_{k=0}^\infty ((2k+d_1)|\xi|)^{\gamma/2} G((2k+d_1)|\xi|)\sum_{|\nu|=k}\langle g, \Phi_\nu^\xi\rangle \Phi_\nu^\xi(x')\widehat{h}(\xi)\|_{L^2(\R_{x'}^{d_1})}^2,
\end{eqnarray*}
and by the orthonormal property for eigenfunctions of different eigenvalues, we have
\begin{eqnarray*}
&&\||x'|^\gamma G(L_\xi)g(x')\widehat{h}(\xi)\|_{L^2(\R_{x'}^{d_1})}^2\\
&& \leq  \sum_{k=0}^\infty\||\xi|^{-\gamma} ((2k+d_1)|\xi|)^{\gamma/2} G((2k+d_1)|\xi|)\sum_{|\nu|=k}\langle g, \Phi_\nu^\xi\rangle \Phi_\nu^\xi(x')\widehat{h}(\xi)\|_{L^2(\R_{x'}^{d_1})}^2.
\end{eqnarray*}
This together with equality~\eqref{eq:We1} implies
\begin{eqnarray}\label{eq:We2}
&&\||x'|^\gamma F(\sqrt{L})f\|_{L^2(\R_{x'}^{d_1}\times \R_{x''}^{d_2})}^2\nonumber\\
&& \leq  \sum_{k=0}^\infty\||\xi|^{-\gamma} ((2k+d_1)|\xi|)^{\gamma/2} G((2k+d_1)|\xi|)\sum_{|\nu|=k}\langle g, \Phi_\nu^\xi\rangle \Phi_\nu^\xi(x')\widehat{h}(\xi)\|_{L^2(\R_{x'}^{d_1}\times \R_{\xi}^{d_2})}^2.
\end{eqnarray}

Let $ \widetilde{G}_{k,x'}(|\xi|) $ be the  function on $\R$ defined by
$$
\widetilde{G}_{k,x'}(|\xi|)=|\xi|^{-\gamma} ((2k+d_1)|\xi|)^{\gamma/2} G((2k+d_1)|\xi|)\sum_{|\nu|=k}\langle g, \Phi_\nu^\xi\rangle \Phi_\nu^\xi(x').
$$
By estimate~\eqref{eq:We2} and Plancherel equality,
\begin{eqnarray}\label{eq:We3}
\||x'|^\gamma F(\sqrt{L})f\|_{L^2(\R_{x'}^{d_1}\times \R_{x''}^{d_2})}^2
&\leq& \sum_{k=0}^\infty \|\mathcal{F}^{-1}\left(\widetilde{G}_{k,x'}(|\xi|) \widehat{h}(\xi)\right)(x'')\|_{L^2(\R_{x'}^{d_1}\times \R_{x''}^{d_2})}^2\nonumber\\
&=&\sum_{k=0}^\infty \|\widetilde{G}_{k,x'}(\sqrt{-\Delta_2})h(x'')\|_{L^2(\R_{x'}^{d_1}\times \R_{x''}^{d_2})}^2.
\end{eqnarray}

Note that $\support G\subset [R^2/16,R^2]$. Thus $\support \widetilde{G}_{k,x'} \subset [0,R^2/(2k+d_1)]$.
Set  $m=2k+d_1$ and $a=R^2/(2k+d_1)$. By restriction type estimates for $-\Delta_2$ (see e.g. \cite{COSY}),
$$
\|\widetilde{G}_{k,x'}(\sqrt{-\Delta_2})h(x'')\|_{L^2(\R_{x''}^{d_2})}^2 \leq C a^{2d_2(\frac{1}{p}-\frac{1}{2})}\|\delta_{a}\widetilde{G}_{k,x'}\|^2_{L^2(\R)} \|h\|^2_{L^p(\R^{d_2})}.
$$
Thus by \eqref{eq:We3},
\begin{eqnarray}\label{eq:We4}
\||x'|^\gamma F(\sqrt{L})f\|_{L^2(\R_{x'}^{d_1}\times \R_{x''}^{d_2})}^2
&\leq & C\sum_{m=1}^\infty a^{2d_2(\frac{1}{p}-\frac{1}{2})}
\|\delta_{a}\widetilde{G}_{k,x'}\|^2_{L^2(\R_{x'}^{d_1}\times \R_{|\xi|} )} \|h\|^2_{L^p(\R^{d_2})}.
\end{eqnarray}

By Proposition~\ref{prop:REforLxi},
\begin{eqnarray}\label{eq:We5}
\|\delta_{a}\widetilde{G}_{k,x'}\|^2_{L^2(\R_{x'}^{d_1}\times \R_{|\xi|} )} & \leq & \int_{\R_{|\xi|}} |a\xi|^{-2\gamma} (am|\xi|)^{\gamma} |G(am|\xi|)|^2\|\sum_{|\nu|=k}\langle g, \Phi_\nu^{a\xi}\rangle \Phi_\nu^{a\xi}\|^2_{L^2(\R_{x'}^{d_1})}      \,d|\xi|\nonumber\\
&\leq&C \int_{\R_{|\xi|}} |a\xi|^{-2\gamma} (am|\xi|)^{\gamma} |G(am|\xi|)|^2 |a\xi|^{d_1(\frac{1}{p}-\frac{1}{2})}m^{d_1(\frac{1}{p}-\frac{1}{2})-1}\|g\|^2_{L^p(\R^{d_1})}      \,d|\xi|\nonumber\\
&\leq&C m^{2\gamma-1}\int_{\R} t^{-2\gamma} t^{\gamma} |G(t)|^2 t^{d_1(\frac{1}{p}-\frac{1}{2})}      \frac{dt}{ma}\|g\|^2_{L^p(\R^{d_1})}\nonumber\\
&\leq &C m^{2\gamma-1} R^{2d_1(\frac{1}{p}-\frac{1}{2})-2\gamma}\|\delta_{R^2}G\|_{L^2(\R)}^2\|g\|^2_{L^p(\R^{d_1})}.
\end{eqnarray}

Combing estimates~\eqref{eq:We4} and \eqref{eq:We5} and noting that
$\gamma<d_2(1/p-1/2)$ yields
\begin{eqnarray*}
&&\||x'|^\gamma F(\sqrt{L})f\|_{L^2(\R_{x'}^{d_1}\times \R_{x''}^{d_2})}^2\\
&&\leq  C \sum_{m=1}^\infty R^{4d_2(\frac{1}{p}-\frac{1}{2})+2d_1(\frac{1}{p}-\frac{1}{2})-2\gamma}
m^{2\gamma-2d_2(\frac{1}{p}-\frac{1}{2})-1}\|\delta_{R^2}G\|_{L^2(\R)}^2\|g\|^2_{L^p(\R^{d_1})}\|h\|^2_{L^p(\R^{d_2})}\\
&&\leq  C R^{4d_2(\frac{1}{p}-\frac{1}{2})+2d_1(\frac{1}{p}-\frac{1}{2})-2\gamma} \|\delta_{R}F\|_{L^2(\R)}^2 \|f\|^2_{L^p(\R^{d_1}\times \R^{d_2})}.
\end{eqnarray*}
This proves the estimate~\eqref{eq:WeightedRE}.

Next we prove  \eqref{eq:WeightedRE2}.
Similarly to the above derivation, we have
\begin{eqnarray}
&&\||x'|^\gamma F(\sqrt{L})P_{B(y,r)}f\|_{L^2(\R_{x'}^{d_1}\times \R_{x''}^{d_2})}^2\nonumber\\
&&\leq C\sum_{m=1}^\infty a^{2d_2(\frac{1}{p}-\frac{1}{2})}
\|\delta_{a}\widetilde{G}_{k,x'}\|^2_{L^2(\R_{x'}^{d_1}\times \R_{|\xi|} )} \|h\|^2_{L^p(\R^{d_2})}\nonumber\\
&&\leq C\sum_{m=1}^\infty a^{2d_2(\frac{1}{p}-\frac{1}{2})}
\int_{\R_{|\xi|}} |a\xi|^{-2\gamma} (am|\xi|)^{\gamma} |G(am|\xi|)|^2\|\sum_{|\nu|=k}\langle g, \Phi_\nu^{a\xi}\rangle \Phi_\nu^{a\xi}\|^2_{L^2(\R_{x'}^{d_1})}      \,d|\xi| \|h\|^2_{L^p(\R^{d_2})}\nonumber\\
&&\leq C \sum_{m=1}^\infty a^{2d_2(\frac{1}{p}-\frac{1}{2})}
\int_{\R} m^{2\gamma}t^{-\gamma} |G(t)|^2\|\sum_{|\nu|=k}\langle g, \Phi_\nu^{t/m}\rangle \Phi_\nu^{t/m}\|^2_{L^2(\R_{x'}^{d_1})}      \,dt/(am) \|h\|^2_{L^p(\R^{d_2})}\nonumber\\
&&\leq C\int_{\R}R^{4d_2(\frac{1}{p}-\frac{1}{2})-2}t^{-\gamma}|G(t)|^2\sum_{m=1}^\infty
 m^{2\gamma-2d_2(\frac{1}{p}-\frac{1}{2})} \|\sum_{|\nu|=k}\langle g, \Phi_\nu^{t/m}\rangle \Phi_\nu^{t/m}\|^2_{L^2(\R_{x'}^{d_1})}      \,dt \|h\|^2_{L^p(\R^{d_2})}\nonumber,
\end{eqnarray}
where the function $g$ has compact support such that $\support g\subset B(y',r)$ which is the standard ball defining by Euclidean distance in $\R^{d_1}$. Note that $\support G\subset [R^2/16,R^2]$. Thus
$R^2\sim t$ in the last integral and then
\begin{eqnarray}\label{eq:y>4r1}
&& \hspace{1cm} \||x'|^\gamma F(\sqrt{L})P_{B(y,r)}f\|_{L^2(\R_{x'}^{d_1}\times \R_{x''}^{d_2})}^2\\
&\leq&C\int_{\R}R^{2d_2(\frac{1}{p}-\frac{1}{2})-2}t^{d_2(\frac{1}{p}-\frac{1}{2})-\gamma}|G(t)|^2\sum_{m=1}^\infty
 m^{2\gamma-2d_2(\frac{1}{p}-\frac{1}{2})} \|\sum_{|\nu|=k}\langle g, \Phi_\nu^{t/m}\rangle \Phi_\nu^{t/m}\|^2_{L^2(\R_{x'}^{d_1})}      \,dt \|h\|^2_{L^p(\R^{d_2})}\nonumber.
\end{eqnarray}
Next we claim that for $0\leq \gamma<d_2(\frac{1}{p}-\frac{1}{2})$,
\begin{eqnarray}\label{eq:y>4r2}
&& \sum_{m=1}^\infty |y'|^{2d_2(\frac{1}{p}-\frac{1}{2})-2\gamma}t^{d_2(\frac{1}{p}-\frac{1}{2})-d_1(\frac{1}{p}-\frac{1}{2})-\gamma}m^{2\gamma-2d_2(\frac{1}{p}-\frac{1}{2})}\|\sum_{|\nu|=k}\langle g, \Phi_\nu^{t/m}\rangle \Phi_\nu^{t/m}\|^2_{L^2(\R_{x'}^{d_1})}\\
 &&\leq C\|g\|_{L^p}^2, \nonumber
\end{eqnarray}
where $C$ is independent of $t$ and $y'$.

In order to prove \eqref{eq:y>4r2} we split the sum into two parts: $m\leq \sqrt{t}|y'|/4$ and $m> \sqrt{t}|y'|/4$.

If $m> \sqrt{t}|y'|/4$, by Proposition~\ref{prop:REforLxi},
\begin{eqnarray*}
&&\sum_{m> \sqrt{t}|y'|/4}^\infty |y'|^{2d_2(\frac{1}{p}-\frac{1}{2})-2\gamma}t^{d_2(\frac{1}{p}-\frac{1}{2})-d_1(\frac{1}{p}-\frac{1}{2})-\gamma}m^{2\gamma-2d_2(\frac{1}{p}-\frac{1}{2})}\|\sum_{|\nu|=k}\langle g, \Phi_\nu^{t/m}\rangle \Phi_\nu^{t/m}\|^2_{L^2(\R_{x'}^{d_1})} \\
&&\leq C\sum_{m> \sqrt{t}|y'|/4}^\infty |y'|^{2d_2(\frac{1}{p}-\frac{1}{2})-2\gamma}t^{d_2(\frac{1}{p}-\frac{1}{2})-d_1(\frac{1}{p}-\frac{1}{2})-\gamma}m^{2\gamma-2d_2(\frac{1}{p}-\frac{1}{2})} (t/m)^{d_1(\frac{1}{p}-\frac{1}{2})} m^{d_1(\frac{1}{p}-\frac{1}{2})-1}\|g\|_{L^p}^2 \\
&&\leq  C \sum_{m> \sqrt{t}|y'|/4}^\infty (\sqrt{t}|y'|)^{2d_2(\frac{1}{p}-\frac{1}{2})-2\gamma}m^{2\gamma-2d_2(\frac{1}{p}-\frac{1}{2})-1}
\|g\|_{L^p}^2\\
&&\leq C \|g\|_{L^p}^2.
\end{eqnarray*}

If $m\leq \sqrt{t}|y'|/4$ and $x'\in B(y',r)$, then $|x'|\geq |y'|/2$ and $m\leq \sqrt{t}|x'|/2$. Moreover, $|m^{-1/2}\sqrt{t}x'|^2\geq 4m$. By \cite[Lemma 8]{MS}, we know that $\sum_{|\nu|=k}|\Phi_{\nu}(x')|^2\leq C\exp(-c|x'|^2)$ when $|x'|^2\geq 2(2k+d_1)$. Hence
\begin{eqnarray*}
\sum_{|\nu|=k}|\Phi_\nu^{t/m}(x')|^2 &=& |t/m|^{d_1/2}\sum_{|\nu|=k}|\Phi_\nu(m^{-1/2}\sqrt{t}x')|^2\\
&\leq&  C|t/m|^{d_1/2} e^{-ct|x'|^2/m}\\
&\leq&  C|t/m|^{d_1/2} e^{-ct|y'|^2/m}.
\end{eqnarray*}
Therefore,
\begin{eqnarray*}
\|\sum_{|\nu|=k}\langle g, \Phi_\nu^{t/m}\rangle \Phi_\nu^{t/m}\|^2_{L^2(\R_{x'}^{d_1})}&\leq& \|g\|_{L^p}^2 \sum_{|\nu|=k}\|\Phi_\nu^{t/m}\|_{L^{p'}(B(y',r))}^2\\
&\leq&\|g\|_{L^p}^2 \|(\sum_{|\nu|=k}|\Phi_\nu^{t/m}|^2)^{1/2}\|_{L^{p'}(B(y',r))}^2\\
&\leq& C \|g\|_{L^p}^2 |t/m|^{d_1/2} e^{-ct|y'|^2/m}r^{2d_1(1-\frac{1}{p})}.
\end{eqnarray*}
Hence
\begin{eqnarray*}
&&\sum_{m\leq \sqrt{t}|y'|/4}^\infty |y'|^{2d_2(\frac{1}{p}-\frac{1}{2})-2\gamma}t^{d_2(\frac{1}{p}-\frac{1}{2})-d_1(\frac{1}{p}-\frac{1}{2})-\gamma}m^{2\gamma-2d_2(\frac{1}{p}-\frac{1}{2})}\|\sum_{|\nu|=k}\langle g, \Phi_\nu^{t/m}\rangle \Phi_\nu^{t/m}\|^2_{L^2(\R_{x'}^{d_1})} \\
&&\leq \sum_{m\leq \sqrt{t}|y'|/4}^\infty (\sqrt{t}|y'|)^{2d_1(1-\frac{1}{p})+2d_2(\frac{1}{p}-\frac{1}{2})-2\gamma}m^{-2d_2(\frac{1}{p}-\frac{1}{2})-d_1/2}e^{-ct|y'|^2/m}
\|g\|_{L^p}^2\\
&&\leq  \sum_{m=1}^\infty \sup_{u>m}u^{d_1(1-\frac{1}{p})+d_2(\frac{1}{p}-\frac{1}{2})-\gamma}m^{d_1(1-\frac{1}{p})-d_1/2}e^{-cu}\|g\|_{L^p}^2\\
&&\leq C \|g\|_{L^p}^2,
\end{eqnarray*}
which complete the proof of the claim~\eqref{eq:y>4r2}.

Combining \eqref{eq:y>4r2} and  \eqref{eq:y>4r1}, we obtain
\begin{eqnarray*}
&&\||x'|^\gamma F(\sqrt{L})P_{B(y,r)}f\|_{L^2(\R_{x'}^{d_1}\times \R_{x''}^{d_2})}^2\nonumber\\
&&\leq  C\int_{\R}R^{2d_2(\frac{1}{p}-\frac{1}{2})-2}|y'|^{2\gamma-2d_2(\frac{1}{p}-\frac{1}{2})}t^{d_1(\frac{1}{p}-\frac{1}{2})}|G(t)|^2     \,dt  \|g\|_{L^p}^2 \|h\|^2_{L^p(\R^{d_2})}\\
&&\leq C R^{2(d_2+d_1)(\frac{1}{p}-\frac{1}{2})}|y'|^{2\gamma-2d_2(\frac{1}{p}-\frac{1}{2})}\int_{\R}|G(t)|^2     \,dt/R^2  \|g\|_{L^p}^2 \|h\|^2_{L^p(\R^{d_2})}\\
&&\leq  C R^{2(d_2+d_1)(\frac{1}{p}-\frac{1}{2})}|y'|^{2\gamma-2d_2(\frac{1}{p}-\frac{1}{2})}\|\delta_R F\|^2_{L^2} \|f\|^2_{L^p(\R^{d_1}\times\R^{d_2})}.
\end{eqnarray*}
This completes the proof of estimate~\eqref{eq:WeightedRE2} and so the proof of Theorem~\ref{thm:WeightedRE}.
\end{proof}

\begin{remark}
Under the assumptions of Theorem~\ref{thm:WeightedRE}, when $\gamma=0$, the estimate~\eqref{eq:WeightedRE} holds for all $1\leq p\leq (2d_2+2)/(d_2+3)$, which means that the condition $p<2d_1/(d_1+2)$ is not necessary in this case. Actually, in our proof, if $\gamma=0$, we do not  need the sharp order $\frac{d_1}{2}(\frac{1}{p}-\frac{1}{2})-\frac{1}{2}$ for $2k+d_1$ in the estimate~\eqref{eq:ReforLxi}. We only need that for all $1\leq p\leq 2$
\begin{eqnarray*}
\|\sum_{|\nu|=k}\langle f,\Phi_\nu^\xi \rangle \Phi_\nu^\xi\|_{L^2(\R^{d_1})}\leq C |\xi|^{\frac{d_1}{2}(\frac{1}{p}-\frac{1}{2})}(2k+d_1)^{(\frac{d_1}{2}-1)(\frac{1}{p}-\frac{1}{2})}\|f\|_{L^p(\R^{d_1})},
\end{eqnarray*}
which can be achieved by interpolation between $p=1$ and the fact that
$$
\|\sum_{|\nu|=k}\langle f,\Phi_\nu^\xi \rangle \Phi_\nu^\xi\|_{L^2}\leq C\|f\|_{L^2}.
$$
See also \cite{LS}.
\end{remark}

\bigskip

\section{Spectral multipliers for compactly supported functions}
\label{sec:H1toL1} \setcounter{equation}{0}

As mentioned in Section 2, the heat kernel of the operator $L$ satisfies a Gaussian upper bound given in terms of the distance
$\rho$.  In addition, $L$ satisfies the Davies-Gaffney estimate and  the finite speed propagation property, see \cite{RS}.
On the other hand, we proved restriction type estimates for the operator $L$ in Section 3. Therefore we may follow ideas in 
\cite{COSY}, Sections 3 and 4,  to prove  spectral multiplier results as well as  Bochner-Riesz summability  results.

Define the  multiplication  operator $w_\gamma$ on $\R^{d_1}\times \R^{d_2}$ by
$$
w_\gamma f(x',x'')=|x'|^{\gamma}f(x',x'').
$$
\begin{lemma}\label{lem:pto2toptop}
Let $F : [0, \infty) \to \mathbb{C}$ be a bounded Borel function. We denote by $K_{F(L)}$ the Schwartz kernel of $F(L)$. Assume that
$$
\support K_{F(L)} \subset \mathcal{D}_r=\{(x,y)\in (\R^{d_1}\times \R^{d_2})\times (\R^{d_1}\times \R^{d_2}): \rho(x,y)\leq r\}
$$
for some $r>0$. Then for $1\leq p\leq 2$, there exists a constant $C=C_{p}$ such that for $\gamma\in [0,d_1(1/p-1/2))$
\begin{eqnarray*}
\|F(L)\|_{p\to p}&\leq& C\sup_{|y'|\leq 4r}
\{r^{(2d_2+d_1)(\frac{1}{p}-\frac{1}{2})-\gamma}\|w_\gamma F(L)P_{B(y,r)}\|_{p\to 2}\}\\
&&+\ C\sup_{|y'|> 4r}
\{r^{(d_2+d_1)(\frac{1}{p}-\frac{1}{2})}|y'|^{d_2(\frac{1}{p}-\frac{1}{2})-\gamma}\|w_\gamma F(L)P_{B(y,r)}\|_{p\to 2}\}.
\end{eqnarray*}

\end{lemma}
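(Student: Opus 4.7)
The plan is the standard atomic decomposition in a doubling metric space, combined with a single H\"older step that inserts the weight $w_\gamma$. Using \eqref{doubling}, choose a maximal $r$-separated family $\{y_i\}$ so that the $\rho$-balls $B(y_i,r)$ cover $\R^{d_1}\times\R^{d_2}$, the enlarged balls $B(y_i,2r)$ have uniformly bounded overlap, and there is a disjoint refinement $\R^{d_1}\times\R^{d_2}=\bigcup_i E_i$ with $E_i\subset B(y_i,r)$ and $E_i\cap E_j=\emptyset$ for $i\neq j$. Put $f_i=\chi_{E_i}f$: the hypothesis $\support K_{F(L)}\subset\mathcal D_r$ forces $\support F(L)f_i\subset B(y_i,2r)$, and combined with the bounded overlap of $\{B(y_i,2r)\}$ and $\sum_i\|f_i\|_{L^p}^p=\|f\|_{L^p}^p$, the lemma reduces to the pointwise (in $i$) bound
$$
\|F(L)f_i\|_{L^p(B(y_i,2r))}\;\le\; A(y_i,r)\,\|f_i\|_{L^p},
$$
where $A(y_i,r)$ must match one of the two suprema in the statement according to the size of $|y_i'|$, after which the conclusion follows by summation and the observation that $f_i=P_{B(y_i,r)}f_i$.

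To introduce $w_\gamma$ on the left-hand side, I apply H\"older's inequality on the support $B(y,2r)$ with exponents $2/p$ and $2/(2-p)$:
$$
\|g\|_{L^p(B(y,2r))}\;\le\;\|w_\gamma g\|_{L^2(B(y,2r))}\Bigl(\int_{B(y,2r)}|x'|^{-\alpha}\,dx\Bigr)^{1/p-1/2},\qquad \alpha:=\tfrac{2\gamma p}{2-p}.
$$
Applied with $g=F(L)P_{B(y_i,r)}f_i$ and followed by the operator bound $\|w_\gamma F(L)P_{B(y_i,r)}f_i\|_{L^2}\le\|w_\gamma F(L)P_{B(y_i,r)}\|_{p\to 2}\|f_i\|_{L^p}$, this reproduces the structure of the stated inequality. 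The algebraic point is that the exponents are arranged so that $\alpha(1/p-1/2)=\gamma$, so any factor of $|x'|^{-\alpha}$ integrated and then raised to the $1/p-1/2$ power delivers a clean $|x'|^{-\gamma}$-type prefactor. It only remains to evaluate $\int_{B(y,2r)}|x'|^{-\alpha}\,dx$ in the two regimes.

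For $|y'|\le 4r$, the description \eqref{eqCD} gives $|x'|\le Cr$ on $B(y,2r)$ and an $x''$-slice at fixed $x'$ of measure $\lesssim r^{2d_2}$; by Fubini the integral is $\lesssim r^{2d_2}\int_{|x'|\le Cr}|x'|^{-\alpha}dx'\sim r^{d_1+2d_2-\alpha}$, convergent precisely when $\alpha<d_1$, i.e.\ $\gamma<d_1(1/p-1/2)$, which is exactly the hypothesis of the lemma. Raising to the power $1/p-1/2$ gives the prefactor $r^{(d_1+2d_2)(1/p-1/2)-\gamma}$, matching the first supremum. For $|y'|>4r$ one has $|x'|\sim|y'|$ uniformly on $B(y,2r)$, so using \eqref{eqVr} the integral is $\sim|y'|^{-\alpha}|B(y,r)|\sim|y'|^{-\alpha+d_2}r^{d_1+d_2}$, producing the prefactor $r^{(d_1+d_2)(1/p-1/2)}|y'|^{d_2(1/p-1/2)-\gamma}$, matching the second. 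The main technical obstacle is the regime $|y'|\lesssim r$, where the Grushin metric is genuinely non-Euclidean: the $x''$-slice measure must be controlled uniformly in $x'$ from \eqref{eqCD}, and the hypothesis $\gamma<d_1(1/p-1/2)$ is forced exactly by the borderline convergence of the radial integral $\int_{|x'|\le Cr}|x'|^{-\alpha}dx'$.
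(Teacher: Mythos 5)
Your proposal is correct and follows essentially the same route as the paper: a Vitali-type covering argument reduces to a single ball, a H\"older step with $1/q = 1/p - 1/2$ inserts the weight $|x'|^{\pm\gamma}$, and the $L^q$-norm of $|x'|^{-\gamma}$ over a $\rho$-ball of radius $\sim r$ is computed in the two regimes $|y'|\lesssim r$ and $|y'|\gtrsim r$ using \eqref{eqCD} and \eqref{eqVr}, with convergence of the $|x'|$-integral giving exactly $\gamma < d_1(1/p-1/2)$. The only difference is cosmetic: you decompose only the source side and let the hypothesis $\support K_{F(L)}\subset\mathcal D_r$ localize the output automatically, whereas the paper uses a symmetric double decomposition $\sum_{i,j:\rho(x_i,x_j)<2r}P_{\widetilde B_i}F(L)P_{\widetilde B_j}$; both are standard and lead to the same estimates.
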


\begin{proof}
First we choose a sequence $(x_n)  \in \R^{d_1}\times \R^{d_2}$ such that
$\rho(x_i,x_j)> r/10$ for $i\neq j$ and $\sup_{x\in \R^{d_1}\times \R^{d_2} }\inf_i \rho(x,x_i)
\le r/10$. Such sequence exists because $\R^{d_1}\times \R^{d_2}$ is separable under the new distance $\rho$.
Second we let $B_i=B(x_i, r)$ and define $\widetilde{B_i}$ by the formula
$$\widetilde{B_i}=\bar{B}\left(x_i,\frac{r}{10}\right)\setminus
\bigcup_{j<i}\bar{B}\left(x_j,\frac{r}{10}\right),$$
where $\bar{B}\left(x, r\right)=\{y\in \R^{d_1}\times \R^{d_2} \colon \rho(x,y)
\le r\}$. Third we put $\chi_i=\chi_{\widetilde B_i}$, where
$\chi_{\widetilde B_i}$ is the characteristic function of the set
${\widetilde B_i}$. Note that for $i\neq j$,
 $B(x_i, \frac{r}{20}) \cap B(x_j, \frac{r}{20})=\emptyset$. Hence
$$
K :=\sup_i\#\{j:\;\rho(x_i,x_j)\le  2r\} \le
  \sup_x  \frac{|B(x, (2+\frac{1}{20})r)|}{
  |B(x, \frac{r}{20})|}< C  41^{d_1+2d_2}< \infty.
$$
It is not difficult to see that
$$
\D_{r}  \subset \bigcup_{\{i,j:\, \rho(x_i,x_j)<
 2 r\}} \widetilde{B}_i\times \widetilde{B}_j \subset \D_{4 r}.
$$
Therefore,
$$
F(L)f = \sum_{i,j:\, {\rho}(x_i,x_j)< 2r} P_{\widetilde B_i}F(L) P_{\widetilde
B_j}f.
$$
Hence by H\"older's inequality
\begin{eqnarray}\label{eq:pto2toptop}
\|F(L) f\|_{p}^p &=& \|\sum_{i,j:\, {\rho}(x_i,x_j)< 2r} P_{\widetilde B_i}F(L)
P_{\widetilde B_j}f\|_{p}^p \nonumber\\
& =&\sum_i \|\sum_{j:\,{\rho}(x_i,x_j)<
2r} P_{\widetilde B_i}F(L)P_{\widetilde B_j}f\|_{p}^p  \nonumber \\
&\le& CK^{p-1}  \sum_i \sum_{j:\,{\rho}(x_i,x_j)< 2r} \| P_{\widetilde
B_i}F(L)P_{\widetilde B_j}f\|_{p}^p \nonumber\\
&\le& CK^{p-1}  \sum_i   \sum_{j:\,{\rho}(x_i,x_j)< 2r}
\||x'|^{-\gamma}\|^p_{L^q(\widetilde{B}_i)} \||x'|^\gamma P_{\widetilde
B_i}TP_{\widetilde B_j}f\|_{2}^p
\end{eqnarray}
where $1/q=1/p-1/2$.

Now we estimate $\||x'|^{-\gamma}\|^p_{L^q(\widetilde{B}_i)}$.

Suppose  $|x_j'|>4r$. Since  $|x_i'-x_j'|<\rho(x_i,x_j)<2r$, we have
$|x_i'|>2r$ and $2|x_j'|>|x_i'|>|x_j'|/2$. Thus, for  $x\in \widetilde{B}_i$, that is, $\rho(x,x_i)\leq r/10$, we have $|x'-x_i'|\leq r/10$. This implies $|x'|>r$ and $|x'|>|x_i'|/2>|x_j'|/4$. Then by \eqref{eqVr}
\begin{eqnarray}\label{eq:volE1}
\||x'|^{-\gamma}\|^p_{L^q(\widetilde{B}_i)}\leq C|x_j'|^{-p\gamma}
\mu(\widetilde{B}_i)^{p/q}\leq C|x_j'|^{-p\gamma}r^{p(d_1+d_2)(\frac{1}{p}-\frac{1}{2})}|x_j'|^{pd_2(\frac{1}{p}-\frac{1}{2})}.
\end{eqnarray}

If  $|x_j'|\leq 4r$, $|x_i'|\leq |x_i'-x_j'|+|x_j'|\leq 6r$. So $x\in \widetilde{B}_i$ implies $|x'|\leq 7r$ and $|x''-x_i''|\leq 13r^2$.
Then
\begin{eqnarray}\label{eq:volE2}
\||x'|^{-\gamma}\|^q_{L^q(\widetilde{B}_i)}\leq \int_{|x''-x_i''|\leq 13r^2}
\int_{|x'|\leq 7r} |x'|^{-q\gamma}dx'dx''\leq C r^{2d_2+d_1-q\gamma}.
\end{eqnarray}

Substituting estimates~\eqref{eq:volE1} and \eqref{eq:volE2} in
\eqref{eq:pto2toptop} finishes  the proof of Lemma~\ref{lem:pto2toptop}.

\end{proof}

Now  we can state and prove the following multiplier theorem for compactly  supported functions.
Recall that $D = \max(d_1 + d_2, 2 d_2)$.
\begin{theorem}\label{th:compactmultipliers}
Suppose that a bounded Borel function $F : \R \to \mathbb{C}$ with compact support in $[1/4,1]$ satisfies
\begin{equation*}
\| F \|_{W_2^s}  < \infty
\end{equation*}
for some $s > D|1/p-1/2|$.  Then the  operator $F(tL)$ is  bounded on
 $L^p(\R^{d_1}\times\R^{d_2})$.  In addition
\begin{equation*}
 \sup_{t>0}\|F(tL)\|_{L^p \to L^{p}} \leq C_p \|F\|_{W_2^s}.
\end{equation*}

\end{theorem}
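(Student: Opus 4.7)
The plan is to follow the dyadic-decomposition framework of Sections 3 and 4 of \cite{COSY}, combining the ball-decomposition Lemma~\ref{lem:pto2toptop} with the weighted restriction estimates of Theorem~\ref{thm:WeightedRE}. First, for fixed $t>0$, set $R=1/\sqrt t$ and write $F(tL)=G(\sqrt L)$ with $G(\lambda)=F(\lambda^2/R^2)$, so that $\support G\subset[R/2,R]$ and a routine change of variables gives $\|\delta_R G\|_{W_2^s}\sim\|F\|_{W_2^s}$. It therefore suffices to prove, uniformly in $R>0$ for any $G$ with $\support G\subset[R/2,R]$, the estimate
\begin{equation*}
\|G(\sqrt L)\|_{L^p\to L^p}\le C\,\|\delta_R G\|_{W_2^s},\qquad s>D|1/p-1/2|.
\end{equation*}

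Next, choose an even dyadic partition of unity $\{\chi_\ell\}_{\ell\ge 0}$ on $\R$ with $\chi_\ell$ supported in $|u|\sim 2^\ell$ for $\ell\ge 1$, define $G_\ell$ on the Fourier side by $\widehat{G_\ell}(\tau)=\chi_\ell(R\tau)\widehat{G}(\tau)$, and set $H_\ell(\lambda)=\eta(\lambda/R)G_\ell(\lambda)$ where $\eta$ is a smooth bump equal to $1$ on $[1/2,1]$ and supported in $[1/4,1]$. Then $\support H_\ell\subset[R/4,R]$, $\sum_\ell H_\ell(\sqrt L)=G(\sqrt L)$ by spectral calculus, and by the finite speed propagation property for the wave equation of $L$ (Section~2) together with the rapid decay of $\hat\eta$, the kernel of $H_\ell(\sqrt L)$ is essentially supported in $\D_{r_\ell}$ with $r_\ell=c\,2^\ell/R$, up to fast-decaying errors that sum to a harmless contribution. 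I would apply Lemma~\ref{lem:pto2toptop} with $r=r_\ell$ to each $H_\ell(\sqrt L)$, using \eqref{eq:WeightedRE} for the supremum over $|y'|\le 4r_\ell$ and \eqref{eq:WeightedRE2} for the supremum over $|y'|>4r_\ell$: the factors of $r_\ell$ and $R$ combine via $r_\ell R\sim 2^\ell$ so that the first regime contributes $(2^\ell)^{(2d_2+d_1)(1/p-1/2)-\gamma}\|\delta_R H_\ell\|_{L^2}$ and the second $(2^\ell)^{(d_1+d_2)(1/p-1/2)}\|\delta_R H_\ell\|_{L^2}$.

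Choose $\gamma$ as close as possible to $\min\{d_1,d_2\}(1/p-1/2)$, subject to the constraints $\gamma<d_1(1/p-1/2)$ from Lemma~\ref{lem:pto2toptop} and $\gamma<d_2(1/p-1/2)$ from Theorem~\ref{thm:WeightedRE}: when $d_1\ge d_2$ the first exponent drops to $(d_1+d_2)(1/p-1/2)^+$, matching the second, while when $d_1<d_2$ it becomes $2d_2(1/p-1/2)^+$, which dominates. In either case the common exponent is $D|1/p-1/2|^+$, so
\begin{equation*}
\|H_\ell(\sqrt L)\|_{L^p\to L^p}\le C\,2^{\ell(D|1/p-1/2|+\varepsilon)}\|\delta_R H_\ell\|_{L^2}
\end{equation*}
for any small $\varepsilon>0$. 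Summing in $\ell$ by Cauchy--Schwarz and invoking the Littlewood--Paley characterization $\|\delta_R G\|_{W_2^s}^2\sim\sum_\ell 2^{2\ell s}\|\delta_R H_\ell\|_{L^2}^2$ closes the proof whenever $s>D|1/p-1/2|$. The main obstacle is precisely this exponent-balancing step, where the two $\gamma$-constraints conspire through the two regimes of $|y'|$ in Lemma~\ref{lem:pto2toptop} to produce exactly the topological exponent $D=\max\{d_1+d_2,2d_2\}$; a secondary technicality is to control the slight spreading of kernel supports caused by the spectral cutoff $\eta(\cdot/R)$, which is handled by inflating $r_\ell$ by a harmless multiplicative constant and absorbing the Schwartz tails of $\hat\eta$ into the fast-decaying part of the sum.
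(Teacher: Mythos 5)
Your overall strategy is in the same spirit as the paper's: dyadic Fourier-side decomposition, finite propagation speed, Lemma~\ref{lem:pto2toptop}, and the two regimes of Theorem~\ref{thm:WeightedRE} with $\gamma$ pushed to $\min\{d_1,d_2\}(1/p-1/2)$ to produce the exponent $D|1/p-1/2|$. The exponent-balancing calculation is right and matches \eqref{eq3.11}--\eqref{eq3.111} in the paper. But there is a genuine gap in how you order the Fourier truncation and the spectral truncation, and this is not a mere technicality.

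You set $H_\ell(\lambda)=\eta(\lambda/R)G_\ell(\lambda)$, then claim to apply Lemma~\ref{lem:pto2toptop} to $H_\ell(\sqrt L)$. Lemma~\ref{lem:pto2toptop} requires the Schwartz kernel of the operator to be \emph{exactly} supported in $\D_r$; that hypothesis is used at the very first step of its proof, where $F(L)=\sum_{i,j:\rho(x_i,x_j)<2r}P_{\widetilde B_i}F(L)P_{\widetilde B_j}$. Now $\widehat{G_\ell}$ is compactly supported, so $G_\ell(\sqrt L)$ does have compact kernel support; but multiplying by the spectral cutoff $\eta(\cdot/R)$ convolves the Fourier transform with a Schwartz (non-compactly-supported) function, so $\widehat{H_\ell}$ is no longer compactly supported, and $K_{H_\ell(\sqrt L)}$ lives on all of $(\R^{d_1}\times\R^{d_2})^2$. "Inflating $r_\ell$ by a harmless multiplicative constant" cannot save this: the tail of $K_{H_\ell(\sqrt L)}$ is not confined to an annulus $\D_{Cr_\ell}\setminus\D_{r_\ell}$, it extends to infinity. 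So Lemma~\ref{lem:pto2toptop} simply does not apply to $H_\ell(\sqrt L)$, and the phrase "up to fast-decaying errors that sum to a harmless contribution" is asserting the conclusion, not proving it.

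The paper's proof resolves exactly this tension by doing the two truncations in the opposite order. It first applies Lemma~\ref{lem:pto2toptop} to $F^{(\ell)}(t\sqrt L)$ (the analogue of your $G_\ell(\sqrt L)$), which \emph{does} have compact kernel support; only afterwards, \emph{inside} the resulting $\|w_\gamma F^{(\ell)}(t\sqrt L)P_{B(y,r)}\|_{p\to 2}$ norm, does it split $F^{(\ell)}=\psi F^{(\ell)}+(1-\psi)F^{(\ell)}$ in order to bring the weighted restriction estimate into play. The piece $\psi F^{(\ell)}$ plays the role of your $H_\ell$; the piece $(1-\psi)F^{(\ell)}$ is then handled by a further dyadic decomposition into $\phi_k F^{(\ell)}$ combined with the Schwartz decay of $\check\eta$, see \eqref{eq3.14}--\eqref{eq3.144}. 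That second half is not a "secondary technicality"; it is the mechanism that makes the spectral cutoff legitimate, and it occupies roughly half the paper's proof. Your draft has nothing in its place. If you want to salvage your ordering, you would need a version of Lemma~\ref{lem:pto2toptop} for kernels with off-diagonal decay rather than compact support, which would be a different and heavier argument; it is cleaner to follow the paper's order.
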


\begin{proof}
Let   $\eta \in C_c^{\infty}(\mathbb R) $   be even and  such that $\support \eta\subseteq \{ \xi: 1/4\leq |\xi|\leq 1\}$ and
$$
\sum_{\ell\in \ZZ} \eta(2^{-\ell} \lambda)=1 \ \ \quad \forall
{\lambda>0}.
$$
Then we  set $\eta_0(\lambda)= 1-\sum_{\ell> 0} \eta(2^{-\ell} \lambda)$,
\begin{eqnarray}\label{eq3.7}
F^{(0)}(\lambda)=\frac{1}{2\pi}\int_{-\infty}^{+\infty}
 \eta_0(t) \hat{F}(t) \cos(t\lambda) \;dt
\end{eqnarray}
and
\begin{eqnarray}\label{eq3.8}
F^{(\ell)}(\lambda) =\frac{1}{2\pi}\int_{-\infty}^{+\infty}
 \eta(2^{-\ell}t) \hat{F}(t) \cos(t\lambda) \;dt.
\end{eqnarray}
Note that in virtue of the Fourier inversion formula
$$
F(\lambda)=\sum_{\ell \ge 0}F^{(\ell)}(\lambda)
$$
and by \cite[Lemma 2.1]{COSY},
$$
\support K_{F^{(\ell)}(t\SL)} \subset \D_{2^{\ell}t}.
$$
Now by Lemma~\ref{lem:pto2toptop}
\begin{eqnarray}\label{eq3.9}
\big\|F(t\SL)\big\|_{p\to p } &\le& \sum_{\ell \ge 0}\big\|F^{(\ell)}(t\sqrt {L}) \big\|_{p\to p }\nonumber\\
&\le& C \sum_{\ell \ge 0}\sup_{|y'|\leq 42^\ell t}
\{(2^\ell t)^{(2d_2+d_1)(\frac{1}{p}-\frac{1}{2})-\gamma}\|w_\gamma F^{(\ell)}(t\sqrt {L})P_{B(y,2^\ell t)}\|_{p\to 2}\}\nonumber\\
&&+\ C \sum_{\ell \ge 0}\sup_{|y'|> 42^\ell t}
\{(2^\ell t)^{(d_2+d_1)(\frac{1}{p}-\frac{1}{2})}|y'|^{d_2(\frac{1}{p}-\frac{1}{2})-\gamma}\|w_\gamma F^{(\ell)}(t\sqrt {L})P_{B(y,2^\ell t)}\|_{p\to 2}\}.
\end{eqnarray}
Since $F^{(\ell)}$ is not compactly
supported we choose a function $\psi \in C_c^\infty(1/16, 4)$ such that $\psi(\lambda)=1$ for $\lambda \in (1/8,2)$
and note that
\begin{eqnarray} \label{eq3.10}
&& \big\|w_\gamma F^{(\ell)}(t\sqrt {L})P_{B(y,2^{\ell}t)}\big\|_{p\to 2} \nonumber\\
&& \le  \big\|w_\gamma\big(\psi
F^{(\ell)}\big)(t\sqrt {L})P_{B(y,2^{\ell}t)}\big\|_{p\to 2 }
 +\big\|w_\gamma\big((1-\psi)F^{(\ell)}\big)(t\sqrt {L})P_{B(y,2^{\ell}t)}\big\|_{p\to
2 }.
\end{eqnarray}

To estimate  the norm $\|w_\gamma\big(\psi
F^{(\ell)}\big)(t\sqrt {L})P_{B(y,2^{\ell}t)}\|_{p\to 2 }$, we use the
weighted restriction  estimates~\eqref{eq:WeightedRE} and the fact that $\psi \in
C_c(1/16,4)$  to obtain
$$
\big\|w_\gamma\big(\psi F^{(\ell)}\big)(t\sqrt {L})P_{B(y,2^{\ell}t)}\big\|_{p\to 2 }
\le C t^{-(2d_2+d_1)(1/p-1/2)+\gamma} \big\| \delta_{t^{-1}}\big(\psi
F^{(\ell)}\big) (t\cdot)\big\|_{L^2}
$$
and for  $|y'|\geq 2^{\ell+2} t$
$$
\big\|w_\gamma\big(\psi F^{(\ell)}\big)(t\sqrt
{L})P_{B(y,2^{\ell}t)}\big\|_{p\to 2 } \le C
t^{-(d_2+d_1)(1/p-1/2)}|y'|^{\gamma-d_2(1/p-1/2)} \big\|
\delta_{t^{-1}}\big(\psi F^{(\ell)}\big) (t\cdot)\big\|_{L^2}
$$ for all $t>0$.

 If  $|y'|>2^{\ell+2} t$, it follows from $s>(d_1+d_2)(1/p-1/2)$ that
 \begin{eqnarray}\label{eq3.11}
&&\sum_{\ell \ge 0}\sup_{|y'|> 2^{\ell+2} t}
\{(2^\ell t)^{(d_2+d_1)(\frac{1}{p}-\frac{1}{2})}|y'|^{d_2(\frac{1}{p}-\frac{1}{2})-\gamma}\|w_\gamma (\psi F^{(\ell)})(t\sqrt {L})P_{B(y,2^\ell t)}\|_{p\to 2}\}\nonumber\\
 &&\le C\sum_{\ell \ge 0}   2^{\ell (d_1+d_2)(\frac{1}{p}-\frac{1}{2})}\big\|  \delta_{t^{-1}}
\big(\psi F^{(\ell)}\big) (t\cdot)\big\|_{L^2}\nonumber\\
&&\leq C \sum_{\ell \ge 0}   2^{\ell (d_1+d_2)(\frac{1}{p}-\frac{1}{2})}\| F^{(\ell)} \|_{L^2}
\\
 &&\leq C\|F\|_{W_{2}^s}.\nonumber
\end{eqnarray}

For $|y'|\leq 2^{\ell+2} t$, we take $\gamma<\min\{d_1,d_2\}(1/p-1/2)$ such that $\min\{d_1,d_2\}(1/p-1/2)-\gamma$ is small enough and $s-D(1/p-1/2)>\min\{d_1,d_2\}(1/p-1/2)-\gamma$. Then for $s>D(1/p-1/2)$
 \begin{eqnarray}\label{eq3.111}
&&\sum_{\ell \ge 0}\sup_{|y'|\leq 2^{\ell+2} t}
\{(2^\ell t)^{(2d_2+d_1)(\frac{1}{p}-\frac{1}{2})-\gamma}\|w_\gamma (\psi F^{(\ell)})(t\sqrt {L})P_{B(y,2^\ell t)}\|_{p\to 2}\}\nonumber\\
& &\le C\sum_{\ell \ge 0}   2^{\ell \left(D(\frac{1}{p}-\frac{1}{2})+\min\{d_1,d_2\}(1/p-1/2)-\gamma\right)}\big\|  \delta_{t^{-1}}
\big(\psi F^{(\ell)}\big) (t\cdot)\big\|_{L^2}\nonumber\\
&&\leq C \sum_{\ell \ge 0}   2^{\ell \left(D(\frac{1}{p}-\frac{1}{2})+\min\{d_1,d_2\}(1/p-1/2)-\gamma\right)}\| F^{(\ell)} \|_{L^2}
\\
 &&\leq C\|F\|_{W_{2}^s}.\nonumber
\end{eqnarray}

Next we show bounds for
   $\big\|w_\gamma\big((1-\psi)F^{(\ell)}\big)(t\sqrt
{L})P_{B(y,2^{\ell}t)}\big\|_{p\to 2 }$.
Since the function $1-\psi$ is supported outside the
interval $(1/8,2)$, we can choose  a function $\phi\in C_c^{\infty}(2,8)$  such that
$$
1=\psi(\l)+\sum_{k\geq 0}\phi(2^{-k}\l)+\sum_{k\leq -6}\phi(2^{-k}\l)=\psi(\l)+\sum_{k\geq
0}\phi_{k}(\l)+\sum_{k\leq -6}\phi_{k}(\l)\quad \quad \forall \l>0.
$$
Hence
$$
\big((1-\psi)F^{(\ell)}\big)(\lambda)=  (\sum_{k\geq 0}+\sum_{k\leq -6})
 \big(\phi_{k}F^{(\ell)}\big)(\lambda) \quad \quad \forall \l>0.
$$

Note that  by the Gaussian upper bound for the heat kernel of $L$, we have  $E_{\SL}\{0\}=0$.
So it follows from Theorem~\ref{thm:WeightedRE} that
\begin{eqnarray*}
&&\big\|w_\gamma\big((1-\psi)F^{(\ell)}\big)(t\sqrt {L})P_{B(y,2^{\ell}t)}\big\|_{p\to 2 }\\
 &&\le   (\sum_{k\geq 0}+\sum_{k\leq -6})
\big\|w_\gamma\big(\phi_{k}F^{(\ell)}\big)(t\sqrt {L})P_{B(y,2^{\ell}t)}\big\|_{p\to 2}
\\
 &&\le   C (\sum_{k\geq 0}+\sum_{k\leq -6})(2^kt^{-1})^{(2d_2+d_1)(1/p-1/2)-\gamma}
 \big\|\delta_{ {2^{k+3}t^{-1}} }\big(\phi_kF^{(\ell)})(t \cdot)\big\|_{\infty}.
\end{eqnarray*}
Note that
$\supp F \subset [1/4, 1]$,  $\support \phi\subset [2, 8]$ and $\check{\eta}$ is in the Schwartz class so
\begin{eqnarray*}
 \big\|\phi_kF^{(\ell)}\big\|_{\infty}= 2^{\ell}\big\|\phi_k(F * \delta_{2^{\ell}}\check{\eta} )   \big\|_{\infty}
\leq C 2^{-M(\ell+\max\{0,k\})}\|F\|_{L^2}
\end{eqnarray*}
and similarly, $ \big\|\phi_kF^{(0)}\big\|_{\infty}\leq C 2^{-M\max\{0,k\}}\|F\|_{L^2}$.
 Therefore
\begin{eqnarray*}
 \big\|w_\gamma\big((1-\psi)F^{(\ell)}\big)(t\sqrt {L})P_{B(y,2^{\ell}t)}\big\|_{p\to 2 }
 &\leq& C  2^{-M\ell} t^{-(2d_2+d_1)(1/p-1/2)+\gamma}
 \|F\|_{L^2}
\end{eqnarray*}
and when $|y'|\geq 2^{\ell+2} t$
\begin{eqnarray*}
 \big\|w_\gamma\big((1-\psi)F^{(\ell)}\big)(t\sqrt {L})P_{B(y,2^{\ell}t)}\big\|_{p\to 2 }
 &\leq& C  2^{-M\ell} t^{-(d_2+d_1)(1/p-1/2)}|y'|^{\gamma-d_2(1/p-1/2)}
 \|F\|_{L^2}
\end{eqnarray*}
 Then by a similar calculation as
in~\eqref{eq3.11} and \eqref{eq3.111},
\begin{eqnarray}\label{eq3.14}
\sum_{\ell \ge 0}\sup_{|y'|> 2^{\ell+2} t}
\{(2^\ell t)^{(d_2+d_1)(\frac{1}{p}-\frac{1}{2})}|y'|^{d_2(\frac{1}{p}-\frac{1}{2})-\gamma}\|w_\gamma ((1-\psi) F^{(\ell)})(t\sqrt {L})P_{B(y,2^\ell t)}\|_{p\to 2}\}\leq  C \|F\|_{L^2 }.\hspace{-1.5cm}
\end{eqnarray}
and
\begin{eqnarray}\label{eq3.144}
\sum_{\ell \ge 0}\sup_{|y'|\leq 2^{\ell+2} t}
\{(2^\ell t)^{(2d_2+d_1)(\frac{1}{p}-\frac{1}{2})-\gamma}\|w_\gamma ((1-\psi) F^{(\ell)})(t\sqrt {L})P_{B(y,2^\ell t)}\|_{p\to 2}\}\leq  C \|F\|_{L^2 }.\hspace{-1.5cm}
\end{eqnarray}
Now we combine \eqref{eq3.9}, \eqref{eq3.10},  (\ref{eq3.11}), (\ref{eq3.111}), (\ref{eq3.14}) and
(\ref{eq3.144}) to  complete the proof of Theorem~\ref{th:compactmultipliers}.

\end{proof}

\section{Proofs of Theorems~\ref{thm1m} and \ref{thm2r}} \label{secProofs}

In this section, we prove our main results, i.e.,  Theorems~\ref{thm1m} and
\ref{thm2r}. The two results follow  from Theorem~\ref{th:compactmultipliers}. To do this, we
need a theorem from \cite{SYY} which states that singular  multiplier results follow  from
the corresponding one  for compactly supported functions.   We recall this explicitly. 

Let  $(X,\rho,\mu)$ be a metric
measure space satisfying doubling condition, that is, for all $r>0$
and $\lambda>1$, 
$$
\mu(B(x,\lambda r))\leq C\lambda^Q\mu(B(x,r)),
$$
where  $C$ and $Q$ are positive constants. 
Let $A$ be a non-negative self-adjoint operator which satisfies the
following two off-diagonal estimates: for some $m\geq 2$, some
$p_0\in [1,2]$ and for all $t>0$ and all $x,y\in X$
$$
\|P_{B(x,t^{1/m})}e^{-tA}P_{B(y,t^{1/m})}\|_{2\to 2}\leq
C\exp\left(-c\Big(\frac{\rho(x,y)}{t^{1/m}}\Big)^{\frac{m}{m-1}}\right)\leqno{(DG_m)}
$$
and
$$
\|P_{B(x,t^{1/m})}e^{-tA}P_{B(y,t^{1/m})}\|_{p_0\to 2}\leq
C\mu(B(x,t^{1/m}))^{-(\frac{1}{p_0}-\frac{1}{2})}\exp\left(-c\Big(\frac{\rho(x,y)}{t^{1/m}}\Big)^{\frac{m}{m-1}}\right).
\leqno{(G_{p_0,2,m})}
$$
Let again $\eta$ be a non trivial $C^\infty$ function with compact support in $(0, \infty)$. We have 
\begin{theorem}\label{th:compacttogeneral}
Let $A$ be a non-negative self-adjoint operator on $L^2(X)$
satisfying off-diagonal estimates~$(DG_m)$ and $(G_{p_0,2,m})$ for
some $1\leq p_0<2$. Assume that for any bounded Borel function $H$
such that $\support H\subset [1/4,4]$, the following condition
holds:
$$
\sup_{t>0}\|H(t\sqrt[m]{A})\|_{p\to p}\leq C\|H\|_{W_q^\alpha}
$$
for some $p\in (p_0,2)$, $\alpha>1/q$, and $1\leq q\leq \infty$.
Then for any bounded Borel function $F$ such that
$$
\sup_{t>0}\|\eta F(t\cdot)\|_{W_q^\alpha}<\infty,
$$
the operator $F(A)$ is bounded on $L^r(X)$ for all $p<r<p'$. In
addition,
$$
\|F(A)\|_{r\to r}\leq C\sup_{t>0}\|\eta F(t\cdot)\|_{W_q^\alpha}.
$$
\end{theorem}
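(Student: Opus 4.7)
The plan is to reduce $F(A)$ to a superposition of compactly supported spectral multipliers via dyadic decomposition, invoke the hypothesis on each piece to obtain a scale-uniform per-scale estimate, and then combine the contributions through a Calder\'on--Zygmund argument adapted to $A$ in the spirit of Duong--McIntosh--Yan and Hofmann--Martell. The output is a weak-type $(p,p)$ endpoint, from which the full range $r\in(p,p')$ follows by Marcinkiewicz interpolation with $L^2$ and duality.

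First, fix a non-negative $\varphi\in C_c^\infty((1/4,4))$ with $\sum_{\ell\in\Z}\varphi(2^{-\ell}\lambda)=1$ on $(0,\infty)$ and take $\eta=\varphi$ for the statement of the hypothesis. Decompose
$$
F(\lambda)=\sum_{\ell\in\Z} F_\ell(\lambda),\qquad F_\ell(\lambda):=\varphi(2^{-\ell}\lambda)F(\lambda),
$$
and set $H_\ell(\mu):=F_\ell(2^\ell\mu^m)$. Then $H_\ell$ is supported in $[2^{-2/m},2^{2/m}]\subset[1/4,4]$ and $F_\ell(A)=H_\ell(2^{-\ell/m}A^{1/m})$. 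Because $\mu\mapsto\mu^m$ is a smooth diffeomorphism on the fixed compact support of $H_\ell$, the chain rule gives $\|H_\ell\|_{W_q^\alpha}\leq C\|\eta F(2^\ell\cdot)\|_{W_q^\alpha}$, so the hypothesis yields the scale-uniform estimate
$$
\|F_\ell(A)\|_{p\to p}=\|H_\ell(2^{-\ell/m}A^{1/m})\|_{p\to p}\leq C\|H_\ell\|_{W_q^\alpha}\leq CM,\qquad M:=\sup_{t>0}\|\eta F(t\cdot)\|_{W_q^\alpha}.
$$
The Sobolev embedding $W_q^\alpha\hookrightarrow L^\infty$ for $\alpha>1/q$ also shows $\|F\|_\infty\leq CM$, hence $\|F(A)\|_{2\to 2}\leq CM$ by the spectral theorem.

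Since $\sum_{\ell\in\Z}$ has infinitely many terms, the triangle inequality alone fails, and the key step is a weak-type $(p,p)$ bound for $F(A)$ proved by a Calder\'on--Zygmund decomposition adapted to $A$. Given $f\in L^p\cap L^2$ and a threshold $\tau>0$, split $f=g+\sum_Q b_Q$ at level $\tau$, with each $b_Q$ supported in a ball $B_Q$ of radius $r_Q$, and write
$$
F(A)b_Q=F(A)\bigl(I-e^{-r_Q^m A}\bigr)b_Q+F(A)e^{-r_Q^m A}b_Q.
$$
For the second term, the off-diagonal bound $(G_{p_0,2,m})$ combined with the $L^2$ estimate for $F(A)$ yields the usual ``far-field'' control. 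For the first term, expand $F(\lambda)(1-e^{-r_Q^m\lambda})=\sum_\ell F_\ell(\lambda)(1-e^{-r_Q^m\lambda})$: each summand inherits the per-scale bound $CM$ together with an extra factor $\min(1,2^\ell r_Q^m)$ from $(1-e^{-r_Q^m\lambda})$ on the support of $F_\ell$, which renders the $\ell$-sum geometrically convergent, while summation over $Q$ is controlled by the Davies--Gaffney assumption $(DG_m)$ applied to the localised spectral multipliers $F_\ell(A)(1-e^{-r_Q^mA})$.

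Combining the resulting weak-type $(p,p)$ bound with the $L^2$ estimate via Marcinkiewicz interpolation gives $F(A):L^r\to L^r$ boundedly for every $r\in(p,2)$. The range $r\in(2,p')$ then follows by duality, since $A$ is self-adjoint, $F(A)^*=\overline{F}(A)$, and the hypothesis on $F$ is invariant under complex conjugation. The main obstacle is precisely the weak-type endpoint: one must match the spectral dyadic partition of $F$ with the spatial Calder\'on--Zygmund cubes so that the off-diagonal decay furnished by $(DG_m)$ and $(G_{p_0,2,m})$ compensates the potentially many scales $\ell$ interacting with each cube, turning the uniform per-scale bound $CM$ into a genuine $L^{p,\infty}$ estimate with a constant independent of $\tau$ and $f$.
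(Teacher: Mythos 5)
The paper does not actually prove this theorem: it invokes \cite[Theorem 3.3]{SYY} and merely remarks that the dimensional restriction $\alpha>Q(\tfrac1p-\tfrac12)$ present in \cite{SYY} can be relaxed to $\alpha>1/q$ upon inspecting that proof. So there is no ``paper's own proof'' to compare against line by line; your task is effectively to reconstruct the argument of Sikora--Yan--Yao.

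Your outline has the right overall shape -- dyadic decomposition of $F$, a scale-uniform $L^p$ bound for each piece coming from the hypothesis, an $L^2$ bound by Sobolev embedding and the spectral theorem, a Calder\'on--Zygmund-type argument adapted to the semigroup, weak-type $(p,p)$, interpolation and duality -- and the rescaling computation $F_\ell(A)=H_\ell(2^{-\ell/m}\sqrt[m]{A})$, $\|H_\ell\|_{W^\alpha_q}\lesssim\|\eta F(2^\ell\cdot)\|_{W^\alpha_q}$ is sound. But the heart of the matter is skipped. The hypothesis gives only \emph{global} $L^p\to L^p$ bounds for each $F_\ell(A)$; it says nothing about spatial localisation. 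To run the DMcY/Blunck--Kunstmann machine you need, for each $\ell$, a genuine off-ball (or off-diagonal in the $L^{p_0}\to L^2$ scale) estimate for $F_\ell(A)(I-e^{-r^mA})$, with a gain that is summable both in $\ell$ and over the Calder\'on--Zygmund balls. Writing ``summation over $Q$ is controlled by the Davies--Gaffney assumption $(DG_m)$ applied to the localised spectral multipliers'' asserts exactly the missing step. $(DG_m)$ and $(G_{p_0,2,m})$ are statements about the heat semigroup $e^{-tA}$; converting them into off-diagonal decay for a general compactly supported multiplier $F_\ell(A)$ is a nontrivial lemma (analytic continuation in $t$, finite propagation speed for $\cos(t\sqrt A)$ when $m=2$, or Phragm\'en--Lindel\"of/subordination for general $m$), and it is precisely what \cite{SYY} supplies and what your sketch leaves to the reader. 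Without that lemma the $\ell$-sum in the ``bad part'' is not controlled, because the extra factor $\min(1,2^\ell r_Q^m)$ only tames the small scales $2^\ell r_Q^m\lesssim 1$; for the large scales you need the spatial decay you have not established. So the proposal, as written, has a genuine gap at its most delicate point, even though the overall strategy matches the spirit of the cited reference.
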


This theorem is taken from  \cite[Theorem 3.3]{SYY}. It is   stated there with the additional assumption that
 $\alpha > Q (\frac{1}{p} - \frac{1}{2})$ where $Q$ is "the" homogeneous dimension. An inspection
of the proof shows that this condition is not needed and the theorem is valid  for $\alpha > \frac{1}{q}$ without appealing to
any dimension.

\bigskip

\noindent {\bf Proofs of Theorems~\ref{thm1m} and \ref{thm2r}}. Note
that from Theorem~\ref{th:Gaussian}, the Grushin operator satisfies
Gaussian upper bound and so it satisfies off-diagonal
estimates~$(DG_m)$ and $(G_{p_0,2,m})$ for $m=1$ and $p_0=1$. Then
Theorem~\ref{thm1m} follows from Theorem~\ref{th:compactmultipliers}
and \ref{th:compacttogeneral}.

To prove Theorem~\ref{thm2r}, we decompose the Bochner-Riesz means
$$
(1-tL)_+^\delta=\phi(L)(1-tL)_+^\delta+(1-\phi(L))(1-tL)_+^\delta,
$$
where $\phi$ is a smooth cutoff function on $\R$ with $\support \phi\subset [-1/2,1/2]$ and $\phi=1$ on interval $[-1/4,1/4]$. Then when $\delta > \max\{D|1/p-1/2|-1/2,0\}$,  $(1-\phi(L))(1-tL)_+^\delta$  is
uniformly bounded on $L^p$ by  Theorem~\ref{th:compactmultipliers}.
For $\phi(L)(1-tL)_+^\delta$, because the function $\phi(\lambda)(1-t\lambda)_+^\delta$ is  smooth  for all $\delta>0$, so the $L^p$-boundedness follows from the Gaussian bound of heat kernel of the operator $L$  and the spectral  multiplier result in \cite[Theorem 3.1]{DOS} or  \cite[Theorem 3.1]{COSY}.

\bigskip

\end{document}